\newcommand{\Hmm}[1]{\leavevmode{\marginpar{\tiny%
$\hbox to 0mm{\hspace*{-0.5mm}$\leftarrow$\hss}%
\vcenter{\vrule depth 0.1mm height 0.1mm width \the\marginparwidth}%
\hbox to
0mm{\hss$\rightarrow$\hspace*{-0.5mm}}$\\\relax\raggedright #1}}}
\newtheorem{theorem}{Theorem}
\newtheorem{lemma}[theorem]{Lemma}
\newtheorem{proposition}[theorem]{Proposition}
\theoremstyle{definition}
\newtheorem{example}[theorem]{Example}
\newtheorem{remark}[theorem]{Remark}
\numberwithin{equation}{section}
\renewcommand{\epsilon}{\varepsilon}
\renewcommand{\phi}{\varphi}
\newcommand{\NN}{\mathbb{N}}
\newcommand{\RR}{\mathbb{R}}
\newcommand{\CC}{\mathbb{C}}
\newcommand{\abs}[1]{\left\lvert #1\right\rvert} %%Command: \abs{X} gives  |X|
\begin{document}
	\title{An improved discrete $ p $-Hardy inequality}
	
	\author{Florian Fischer}
	\address{Florian Fischer, Institute of Mathematics, University of Potsdam, Germany}
	\email{florifis@uni-potsdam.de}
	\author{Matthias Keller}
	\address{Matthias Keller, Institute of Mathematics, University of Potsdam, Germany}
	\email{matthias.keller@uni-potsdam.de}
	\author{Felix Pogorzelski}
	\address{Felix Pogorzelski, Institute of Mathematics, University of Leipzig, Germany}
	\email{felix.pogorzelski@math.uni-leipzig.de}

\begin{abstract}
	We improve the classical discrete Hardy inequality for $ 1<p<\infty $ for functions on the natural numbers. For integer values of $ p $ the Hardy weight is shown to have a series expansion with strictly positive coefficients.
	%\textbf{Keywords:} 
		%%%%%%%%%%%%%%%%%%%
	\\[2mm]
	\noindent  2000  \! {\em Mathematics  Subject  Classification.}
	Primary  \! 26D15; Secondary  47J05 
	%\\[2mm]
	%\noindent {\em Keywords.} Hardy inequality, Eikonal inequality, Positive solutions, Discrete Schr\"odinger operators, Rellich inequality, Weighted graphs.
	%%%%%%%%%%%%%%%%%%%
\end{abstract}

\maketitle
%\tableofcontents
\section{Introduction and Main Result}
In 1918 Hardy was looking for a simple and elegant proof of Hilbert's theorem in the context of the convergence of double sums, \cite{Hardy20}. Although it is not explicitly mentioned, the paper contains  the essential argument for his then famous inequality. In a letter to Hardy in 1921,  \cite{Landau21}, Landau gave a proof  with the sharp constant  
\[\sum_{n=1}^\infty a_n \geq \biggl(\frac{p-1}{p} \biggr)^p \sum_{n=1}^\infty \biggl( \frac{a_1+a_2+\ldots+ a_n}{n} \biggr)^p\]
for $p>1$ where $(a_n)$ is an arbitrary sequence of non-negative real numbers. This inequality was first highlighted in \cite{HLP34} and  is referred to as  a \emph{$p$-Hardy inequality}. Since then various proofs of this inequality were given, where short and elegant ones are due to Elliott \cite{Elliott26} and Ingham, see \cite[p. 243]{HLP34} and most recently by Lef\`evre \cite{Lef19}. See also \cite{KMP06} for a beautiful historical survey about the origins of Hardy's inequality.

It is not hard to see that the inequality above can be derived from the following inequality for compactly supported $\phi\in C_c(\NN)$ with $\phi(0)=0$
\begin{align*}%\label{eq:HardyIneq}
\sum_{n=1}^\infty \abs{\phi(n)-\phi(n-1)}^p \geq  \sum_{n=1}^\infty w_{p}^{H}(n){\abs{\phi(n)}^p},
\end{align*}
where  
\begin{align*}
w_{p}^{H}(n)=\biggl(\frac{p-1}{p} \biggr)^p\frac{1}{n^p}.
\end{align*}
 In this paper, we  prove the following improvement of this inequality.

\begin{theorem}\label{thm}
	Let $p> 1$. Then, for all $\phi\in C_c(\NN)$ with $\phi(0)=0$,
	\begin{align*}%\label{eq:ImprovedHI}
	\sum_{n=1}^\infty \abs{\phi(n)-\phi(n-1)}^p \geq \sum_{n=1}^\infty w_{p}(n)\abs{\phi(n)}^p,
	\end{align*}
	where $w_{p}$ is a strictly positive function given by
	\begin{align*}%\label{eq:w(n)}
	w_p(n)=\left( 1-\left(1-\frac{1}{n}\right)^{\frac{p-1}{p}}\right)^{p-1}-\left( \left(1+\frac{1}{n}\right)^{\frac{p-1}{p}}-1\right)^{p-1}.
	\end{align*}
	Furthermore, we have  for all  $ n\in\NN  $
	\[ w_p(n)>w_{p}^{H}(n).\] 
	Moreover, for integer $ p\ge 2 $, we have $ w_{p}(n)=\sum_{k\in 2\NN_{0}}c_{k} n^{-k-p} $ with $ c_{k}>0 $.
\end{theorem}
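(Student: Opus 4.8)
The plan is to build everything on the single positive function $u(n)=n^{\alpha}$ with $\alpha:=(p-1)/p$. Factoring $u(n)^{p-1}=n^{\alpha(p-1)}$ out of the two increments $u(n)-u(n-1)$ and $u(n+1)-u(n)$ and using $u(n-1)/u(n)=(1-1/n)^{\alpha}$, $u(n+1)/u(n)=(1+1/n)^{\alpha}$, I would first record the algebraic identity
\[
w_p(n)\,u(n)^{p-1}=\bigl(u(n)-u(n-1)\bigr)^{p-1}-\bigl(u(n+1)-u(n)\bigr)^{p-1}.
\]
In other words, $u$ is a positive solution of the $p$-Laplace equation $Lu=w_p\,u^{p-1}$, where $(L\psi)(n)=\abs{\psi(n)-\psi(n-1)}^{p-2}(\psi(n)-\psi(n-1))-\abs{\psi(n+1)-\psi(n)}^{p-2}(\psi(n+1)-\psi(n))$. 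This identity is the hinge connecting all three claims.

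For the inequality itself I would run the ground-state (Picone) transform. Since $\abs{\,\abs{\phi(n)}-\abs{\phi(n-1)}\,}\le\abs{\phi(n)-\phi(n-1)}$, I may assume $\phi\ge0$. The engine is the pointwise discrete Picone inequality
\[
\abs{t-s}^p\ge (b-a)^{p-1}\left(\frac{t^p}{b^{p-1}}-\frac{s^p}{a^{p-1}}\right),\qquad 0<a\le b,\ s,t\ge0,
\]
which for $p=2$ is just $(bs-at)^2/(ab)\ge0$ and in general is a consequence of the convexity of $r\mapsto r^p$. Applying it on each edge with $a=u(n-1)$, $b=u(n)$, $s=\phi(n-1)$, $t=\phi(n)$ (the boundary edge being covered by $\phi(0)=0$), summing over $n$, and then performing an Abel summation, the right-hand side telescopes: with $h(n):=(u(n)-u(n-1))^{p-1}$ it becomes $\sum_n\bigl(h(n)-h(n+1)\bigr)\phi(n)^p/u(n)^{p-1}$, and the identity of the first paragraph rewrites $h(n)-h(n+1)=w_p(n)u(n)^{p-1}$, leaving exactly $\sum_n w_p(n)\phi(n)^p$. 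This proves the main inequality.

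For the structural statements I would set $x:=1/n\in(0,1]$, $P(x):=1-(1-x)^{\alpha}$, $N(x):=(1+x)^{\alpha}-1$, so that $w_p(n)=P(x)^{p-1}-N(x)^{p-1}$. For $\alpha\in(0,1)$ one has $\binom{\alpha}{j}=(-1)^{j-1}\abs{\binom{\alpha}{j}}$, whence $P(x)=\sum_{j\ge1}p_jx^j$ and $N(x)=\sum_{j\ge1}(-1)^{j-1}p_jx^j$ with $p_j:=\abs{\binom{\alpha}{j}}>0$; consequently $\pi(x):=P(x)/x$ satisfies $N(x)/x=\pi(-x)$, so that $\Lambda(x):=x^{-(p-1)}w_p=\pi(x)^{p-1}-\pi(-x)^{p-1}$ is an odd power series. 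This already forces, for every $p>1$, an expansion $w_p(n)=\sum_{k\in2\NN_0}c_kn^{-k-p}$ whose leading coefficient is $c_0=\alpha^p=w_p^H(n)\,n^p$. When $p$ is an integer, writing $P=O+E$ and $N=O-E$ with $O,E$ the odd and even parts of $P$ (both with strictly positive coefficients) and expanding $(O+E)^{p-1}-(O-E)^{p-1}=2\sum_{i\ \mathrm{odd}}\binom{p-1}{i}O^{p-1-i}E^i$, every $c_k$ is a sum of coefficients of positive-coefficient series; already the term $2(p-1)O^{p-2}E$ contributes positively to each power $n^{-k-p}$, giving $c_k>0$. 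As $c_0=\alpha^p$, this simultaneously yields $w_p>w_p^H$ in the integer case.

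The comparison $w_p>w_p^H$ for non-integer $p$ is the genuine obstacle, since for non-integer $p-1$ the generalized binomial coefficients $\binom{p-1}{i}$ change sign and the positivity argument collapses. Here I would prove the one-variable inequality $P(x)^{p-1}-N(x)^{p-1}>\alpha^px^p$ on $(0,1]$ directly. Both sides have the same leading term $\alpha^px^p$, so the content is that the odd function $\Lambda(x)-\alpha^px$ stays positive; I would attack this through the secant quotient $(P^{p-1}-N^{p-1})/(P-N)$, estimating it below by means of $P\ge\alpha x\ge N$ and $\sqrt{PN}\ge\alpha x$ — the latter coming from Cauchy--Schwarz applied to $P=\int_0^x\alpha(1-s)^{\alpha-1}\,\dd s$ and $N=\int_0^x\alpha(1+s)^{\alpha-1}\,\dd s$ together with $(1-s^2)^{\alpha-1}\ge1$ — and combining with $P-N=2E(x)>0$. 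Making this lower bound uniform in $p$ is delicate because the convexity of $r\mapsto r^{p-1}$ reverses at $p=2$, so I expect to split the analysis into the ranges $1<p\le2$ and $p\ge2$.
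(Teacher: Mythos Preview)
Your derivation of the Hardy inequality and of the positive-coefficient expansion for integer $p\ge2$ is correct and coincides with the paper's argument. The paper also takes $u(n)=n^{(p-1)/p}$, verifies $\Delta_p u=w_p u^{p-1}$, and deduces the inequality from a pointwise bound equivalent to your Picone inequality (their Proposition~\ref{prop}, based on $|a-t|^p\ge(1-t)^{p-1}(|a|^p-t)$). For the coefficients, the paper uses exactly your odd/even splitting, phrased as ``$w_+(x)=P(x)^{p-1}$ is absolutely monotonic and $w_-(x)=N(x)^{p-1}$ has the same Taylor coefficients with alternating signs''; your formulation $(O+E)^{p-1}-(O-E)^{p-1}=2\sum_{i\ \mathrm{odd}}\binom{p-1}{i}O^{p-1-i}E^i$ is a clean way to say the same thing.

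The genuine gap is $w_p>w_p^H$ for non-integer $p$, which you flag but do not prove. Your Cauchy--Schwarz bound $\sqrt{PN}\ge\alpha x$ is elegant, and for $p\ge2$ it can indeed be pushed through: combining it with the secant estimate
\[
\frac{P^{p-1}-N^{p-1}}{P-N}\ \ge\ (p-1)\bigl(\sqrt{PN}\bigr)^{\,p-2}
\]
(which for $a>b>0$ reduces, via $t=a/b=e^{2u}$, to $\sinh((p-1)u)\ge(p-1)\sinh u$) and $P-N=2E(x)>2p_2x^2$ with $2(p-1)p_2=\alpha^2$, one obtains $w_p>w_p^H$ on all of $(0,1]$. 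However, for $1<p<2$ both ingredients reverse: $r\mapsto r^{p-1}$ is concave, so the secant is \emph{below} $(p-1)(\sqrt{PN})^{p-2}$, and since $p-2<0$ the inequality $\sqrt{PN}\ge\alpha x$ now points the wrong way after exponentiation. Your sketch offers no substitute mechanism in this range, and ``splitting at $p=2$'' is not by itself an argument.

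The paper's route here is entirely different from yours and considerably more laborious. It writes, with $g(x)=q\sum_{k\ge1}\binom{1/q}{k+1}x^k$,
\[
w(x)=\Bigl(\tfrac{x}{q}\Bigr)^{p-1}\Bigl(\tfrac{x}{q}+E_p(x)+F_p(x)\Bigr),\qquad F_p(x)=\sum_{n\ge2}\binom{p-1}{n}\bigl(g^n(-x)-g^n(x)\bigr),
\]
notes $E_p>0$, and then proves $E_p+F_p>0$ on $(0,1/2]$ by a three-way case distinction on the parity of the integer interval containing $p$ (odd--even with $p\ge3$; $1<p\le2$; even--odd), each case using tailored elementary bounds on the binomial coefficients $\binom{p-1}{n}$, $\binom{1/q}{k}$ and on $g(\pm x)$; the value $n=1$ is treated separately via the mean value theorem. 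None of this uses $\sqrt{PN}\ge\alpha x$. So for $p\ge2$ your idea is shorter than the paper's, but for $1<p<2$ you still owe a proof.
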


\begin{example} The case $p = 2$ was already covered in \cite{KePiPo3}. In this case one gets	 
	 $ 	 w_2(1) =  2 - \sqrt{2} $ and for $ n\ge 2 $
	 \begin{align*}
	 w_2(n) &= \,\, - \sum_{k \in 2\NN} \binom{1/2}{k}  \frac{2}{n^k} 
	 \, =\,\, \frac{1}{4} \frac{1}{n^2} + \frac{5}{64} \frac{1}{n^4} + \frac{21}{512} \frac{1}{n^6} + \frac{429}{16384} \frac{1}{n^8}+ \ldots
	 \end{align*}
	 In the case
	 $p=3$, one obtains
$ 	 w_3(1) = \,\, 1 - (2^{2/3}-1)^2 $
	 and for $ n\ge 2 $
	 \begin{align*}
	 w_3(n) 	 &=  \sum_{k \in 2\NN + 1} \left( 2\,\binom{2/3}{k} -  \binom{4/3}{k}  \right) \,\frac{2}{n^k} = \frac{8}{27} \frac{1}{n^3} + \frac{8}{81} \frac{1}{n^5} + \frac{112}{2187} \frac{1}{n^7} + \ldots 
	 \end{align*}
	 In the case $p=4$, one gets
$ 	 w_4(1) = \,\, 1 - (2^{3/4}-1)^3  $ and for $ n\ge 2 $	 
	 \begin{align*}
	 {w_4(n)} 	 &=   \sum_{k \in 2\NN + 2} \left(
	 3 \binom{3/2}{k} -3 \binom{3/4}{k} 
	 - \binom{9/4}{k}  \right) \,\frac{2}{n^k} \\
	 &= \quad \frac{81}{256} \frac{1}{n^4} + \frac{891}{8192} \frac{1}{n^6} + \frac{58653}{1048576} \frac{1}{n^8} +  \ldots 
	 \end{align*}
	 For general $ p>1 $ one obtains the  asymptotics
	 \begin{align*}
	 w_{p}(n)=\left(\frac{p - 1}{pn} \right)^p \bigl(1+a_{p}(n)\bigr),
	 \end{align*}
	 where
	 \begin{align*}
a_{p}(n)	=  \left(\frac{ 3}{ 8} - \frac{1}{8 p}\right) \frac{1}{n^2} + \left(\frac{215 p^3 - 38 p^2 - 31 p + 6}{1152 p^3} \right)\frac{1}{n^4} +  O\left(\frac{1}{n^6}\right).
	 \end{align*}
	 From this formula it is clear that $ w_{p}(n) $ is strictly larger than the classical Hardy weight for large $ n $. Note however that the theorem above states that $ a_{p}(n)>0 $ at all places $n \in \NN$. It is not hard to check that $ a_{p}(n) $ can be expanded into a power series with respect to $ 1/n $
	 where all odd  coefficients  vanish. Theorem~\ref{thm} states that for integer $ p\ge 2 $ these coefficients are positive. We conjecture that all these coefficients are strictly positive for all $ p>1 $. 
\end{example}

%The paper is structured as follows: In Section~\ref{sec:setting} we are setting the scene and we show implicitly, that $w_p$ fulfils equation~\eqref{eq:HardyIneq}. Thereafter, in Section~\ref{sec:w_p>w_H}, we show that $w_p(n)> w^H_p(n)$ for all $n\geq 2$. In Section~\ref{sec:n=1} we show the desired inequality for for the missing case $n=1$. In the last section, Section~\ref{sec:proof} we shortly conclude from the previous results that indeed, $w_p$ is an improved $p$-Hardy weight on $\NN$.

\section{Proof of the Hardy inequality}\label{sec:Hardy}

%A mapping $u\colon \NN\to \RR$ such that $u\geq 0$ is called \emph{non-negative}. A non-negative mapping $u\colon \NN\to \RR$ is called \emph{weight} if $u>0$ on $\NN$. A weight for which \eqref{eq:HardyIneq} holds for all $\phi\in C_c(\NN)$, $\phi(0)=0$, is called \emph{$p-$Hardy weight}. For a given weight $u$, the space of all mappings $f\colon \NN\to \RR$ such that $\sum_{n=1}^\infty\abs{f(n)}^pu(n)<\infty$ is denoted by $\ell^p(\NN, u)$. This space is a Banach space with respect to the norm $\norm{\cdot}_{p,u}$ defined via
%\[\norm{f}_{p,u}=\biggl( \sum_{n=1}^\infty \abs{f(n)}^pu(n) \biggr)^{1/p}\]
%for all $f\in\ell^p(\NN,u)$.

%We consider the standard graph on $\NN_0=\set{0}\cup \NN$, meaning that $n, m\in\NN_0$ are connected if $\abs{n-m}=1$. We then write $n\sim m$. One can think of $0$ as being the boundary point of $\NN$. Any function $f$ on $\NN$ will be extended to a function on $\NN_0$ by setting $f(0)=0$. 

The \emph{combinatorial $p$-Laplacian} $\Delta_p$ for real valued functions on $ \NN_{0} $ is given by
\[\Delta_p f(n)=\sum_{m=n\pm1}\mathrm{sgn}\left( f(n)-f(m) \right)\abs{f(n)-f(m)}^{p-1}\]
for all functions $f$ and $ n\ge 1 $, where $ \mathrm{sgn} $ is the function which takes the value $ -1 $ on $ (-\infty,0) $, the value $ 1 $ on $ (0,\infty) $ and $ 0 $ at $ 0 .$ 

%For a weight $u$ on $\NN$ we define a form $h_{p,u}$ on $C_c(\NN)$ via 
%\[h_{p,u}(\phi)= \frac{1}{2}\sum_{n\in\NN}\sum_{m\sim n}u(n)^{p/2}u(m)^{p/2}\abs{\phi(n)-\phi(m)}^p\]
%for all $\phi\in C_c(\NN)$. If $u=1$ we set $h_p=h_{p,1}$.

The following proposition is needed in order to show that the weight $w_p$ is in fact a $p$-Hardy weight.
\begin{proposition}\label{prop}
	Let $p> 1$ and let $u\colon\NN_0\to[0,\infty)$ be strictly positive on $\NN$ and such that $u(0) = 0$. Suppose that $w\colon \NN\to \RR$ satisfies $\Delta_p u=wu^{p-1}$ on $\NN$. Then for all $\phi\in C_c(\NN)$ with $\phi(0)=0$ we have
	\[	\sum_{n\in\NN}	\abs{\phi(n)-\phi(n-1)}^p\geq \sum_{n\in\NN}w(n)\abs{\phi (n)}^p.\]
\end{proposition}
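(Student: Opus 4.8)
The plan is to use a \emph{ground state substitution} (sometimes called the Picone-type or Agmon-type identity for the $p$-Laplacian) together with a discrete summation by parts. Concretely, write $\phi = u\psi$ for a function $\psi \in C_c(\NN)$ with $\psi(0) = 0$; since $u$ is strictly positive on $\NN$, every compactly supported $\phi$ with $\phi(0)=0$ arises this way, so the substitution is lossless. The goal is to establish the pointwise/summed inequality
\begin{align*}
\sum_{n\in\NN}\abs{\phi(n)-\phi(n-1)}^p \;-\; \sum_{n\in\NN} w(n)\abs{\phi(n)}^p \;\geq\; 0,
\end{align*}
and the strategy is to rewrite the subtracted term using the hypothesis $\Delta_p u = w u^{p-1}$.

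First I would handle the $w$-term. Using $\phi(n) = u(n)\psi(n)$ and the defining equation $w(n)u(n)^{p-1} = \Delta_p u(n) = \sum_{m = n\pm 1}\mathrm{sgn}(u(n)-u(m))\abs{u(n)-u(m)}^{p-1}$, we get
\begin{align*}
\sum_{n\in\NN} w(n)\abs{\phi(n)}^p = \sum_{n\in\NN}\Big(\sum_{m=n\pm1}\mathrm{sgn}(u(n)-u(m))\abs{u(n)-u(m)}^{p-1}\Big)\abs{\psi(n)}^p\, u(n).
\end{align*}
Because $\phi$ (hence $\psi$) is compactly supported, all sums are finite and I can reorganize the double sum over unoriented edges $\{n-1,n\}$ of $\NN_0$, collecting the two contributions of each edge. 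A careful bookkeeping (using $u(0)=0=\psi(0)$ so that the edge $\{0,1\}$ behaves correctly) turns the right-hand side into a sum over edges of terms of the shape
\begin{align*}
\mathrm{sgn}(u(n)-u(n-1))\,\abs{u(n)-u(n-1)}^{p-1}\big(\psi(n)^{[p]}u(n) - \psi(n-1)^{[p]}u(n-1)\big),
\end{align*}
where I write $t^{[p]} := \mathrm{sgn}(t)\abs{t}^{p}$ (if $\psi$ may be sign-changing; for nonnegative $\psi$ this is just $\psi^p$). This is exactly a discrete Abel/summation-by-parts step, and it is legitimate precisely because of compact support.

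The crux is then a pointwise convexity-type inequality on each edge: for all real $a,b$ (the values $u(n-1),u(n)$, both $\geq 0$) and all $s,t\geq 0$ (the values $\psi(n-1),\psi(n)$), one needs
\begin{align*}
\abs{at - bs}^p \;\geq\; \mathrm{sgn}(b-a)\abs{b-a}^{p-1}\big(t^p b - s^p a\big),
\end{align*}
or the appropriate sign-changing variant. This is the genuine analytic heart of the proposition; it is a known $p$-convexity inequality (for $p=2$ it reduces to the elementary $(at-bs)^2 \geq (b-a)(bt^2 - as^2)$, which rearranges to $(bs - at)^2 + ab(s-t)^2 \geq ab(s-t)^2$, i.e.\ a perfect-square identity). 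For general $p>1$ I would prove it by reducing to the one-variable inequality: after dividing through (in the nontrivial case $a,b>0$) and setting $r = t/s$ or similar, it becomes a statement of the form $\abs{x - 1}^p \geq \mathrm{sgn}(\cdots)\cdot(\cdots)$ that follows from convexity of $x\mapsto \abs{x}^p$ and the tangent-line (subgradient) inequality $\abs{x}^p \geq \abs{y}^p + p\,\mathrm{sgn}(y)\abs{y}^{p-1}(x-y)$. The degenerate cases $a=0$ or $b=0$ (which includes the boundary edge $\{0,1\}$) must be checked separately but are immediate. Summing this pointwise inequality over all edges yields exactly $\sum\abs{\phi(n)-\phi(n-1)}^p \geq \sum w(n)\abs{\phi(n)}^p$, completing the proof.

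I expect the main obstacle to be establishing the edgewise inequality in the general case $p > 1$ with possibly sign-changing $\psi$, since the $\mathrm{sgn}$ factors force a careful case analysis and one must identify the right single-variable convex-function estimate; the summation-by-parts rearrangement, by contrast, is routine once one is disciplined about the boundary term at $n=1$ and the vanishing condition $u(0)=\phi(0)=0$.
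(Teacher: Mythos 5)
Your proposal follows essentially the same route as the paper: substitute $\phi=u\psi$, sum by parts over the edges of $\NN_0$ using compact support and $u(0)=\psi(0)=0$ to produce $\sum_n|\psi(n)|^pu(n)\Delta_pu(n)$, and reduce everything to a single edgewise convexity inequality (the paper imports it as Lemma~2.6 of [FS08], namely $\abs{a-t}^p\ge(1-t)^{p-1}(\abs{a}^p-t)$ for $0\le t\le1$, which is exactly your edgewise inequality after dividing by $u(n)^p\abs{\psi(m)}^p$). Two points to fix in the write-up. First, your displayed edge inequality mispairs the variables: with $a=u(n-1)$, $b=u(n)$, $s=\psi(n-1)$, $t=\psi(n)$ the left side must be $\abs{bt-as}^p$, not $\abs{at-bs}^p$; as written it fails already on the boundary edge ($a=s=0$, $b=t=1$ gives $0\ge 1$), and your $p=2$ "rearrangement" should read $(bt-as)^2=(b-a)(bt^2-as^2)+ab(t-s)^2$. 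Second, the correct right-hand side uses $\abs{\psi}^p$, not the signed power $\mathrm{sgn}(\psi)\abs{\psi}^p$ — the latter would not recombine into $\sum_n w(n)\abs{\phi(n)}^p$; no sign-changing variant is needed, since the one-variable inequality already holds for arbitrary real (even complex) $a=\psi(n)/\psi(m)$ with $\abs{a}^p$ on the right, the case $\psi(m)=0$ being handled separately via $u(n)^p\ge\abs{u(n)-u(m)}^{p-1}u(n)$.
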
 
The  proof is follows along the lines of the proof of Proposition~2.2 in \cite{FS08}.
\begin{proof}%[Proof of Proposition~\ref{prop}]
	Let $p> 1$. From Lemma 2.6 in \cite{FS08}, we obtain for all $0\leq t\leq 1$ and $a\in \CC$
	\begin{align*}%\label{eq:FS1}
	\abs{a-t}^p \geq (1-t)^{p-1}(\abs{a}^p-t).
	\end{align*}	
	Let $ w $ be such that $\Delta_p u=wu^{p-1}  $ and $ \psi\in C_{c}(\NN) $. We assume for a moment that $m,n \in \NN$ are such that $u(n) \geq u(m)$ and $\psi(m) \neq 0$.
	We apply the above inequality with the choice $t=u(m)/u(n)$ and $a=\psi(n)/\psi(m)$ in order to obtain
	\[
	\big| (u\psi)(n) - (u\psi)(m) \big|^p \geq \big|u(n) - u(m) \big|^{p-1} \big( |\psi(n)|^p u(n) - |\psi(m)|^p u(m) \big).
	\]
	Further, since $u^{p}(n) \geq |u(n) - u(m)|^{p-1} u(n)$, the above inequality remains true even if $\psi(m) = 0$. Summing over $\NN$, we obtain
	\begin{align*}
\lefteqn{	\sum_{n\in\NN}	\abs{(u\psi)(n)-(u\psi)(n-1)}^p}\\
	&\ge\sum_{n\in\NN}\mathrm{sgn}(u(n)-u(n-1))\abs{u(n)-u(n-1)}^{p-1}\bigl(\abs{\psi(n)}^pu(n)-\abs{\psi(n-1)}^pu(n-1) \bigr) \\
	&=\sum_{n\in\NN}\abs{\psi(n)}^pu(n)\Delta_p u(n).
	\end{align*}
	Note that the latter equality follows from rearranging the involved sums while recalling that $u(0)= 0$. 
	Using the assumption $\Delta_p u=wu^{p-1}  $ we arrive at
	\begin{align*}
	\ldots&=\sum_{n\in\NN}w(n)\abs{(\psi u)(n)}^p.
	\end{align*}
	With $ \phi=u\psi $ and by  strict positivity of $ u $ on $\NN$, we infer the statement.
\end{proof}

Next we show that for the  weight $ w_{p} $ on $ \NN $ taken from Theorem~\ref{thm} 
\begin{align*}
	w_p(n)=\left( 1-\left(1-{1}/{n}\right)^{{(p-1)}/{p}}\right)^{p-1}-\left( \left(1+{1}/{n}\right)^{{(p-1)}/{p}}-1\right)^{p-1},
\end{align*}
there is a suitable positive function $ u $ such that $ \Delta_{p}u =w_{p}u^{p-1}$.

\begin{proposition}\label{p:w_p}	Let $p> 1$. Then, the function $ u\colon \NN_0 \to [0,\infty)$, $ u(n)=n^{(p-1)/p} $ satisfies
	\begin{align*}
	\Delta_{p}u=w_{p}u^{p-1}\qquad \mbox{on }\NN.
	\end{align*}
\end{proposition}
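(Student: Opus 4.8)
The proof is a direct computation: evaluate $\Delta_p u(n)$ for $n\in\NN$, factor out a power of $n$ that matches $u(n)^{p-1}$, and read off $w_p(n)$ from what remains.

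First I would use monotonicity. Since $p>1$, the exponent $(p-1)/p$ lies in $(0,1)$, so $t\mapsto t^{(p-1)/p}$ is strictly increasing on $[0,\infty)$; hence $u(n-1)<u(n)<u(n+1)$ for every $n\in\NN$, where $u(0)=0$. Consequently the sign factors in the definition of $\Delta_p$ equal $+1$ on the edge $\{n-1,n\}$ and $-1$ on the edge $\{n,n+1\}$, which gives
\[
\Delta_p u(n)=\bigl(u(n)-u(n-1)\bigr)^{p-1}-\bigl(u(n+1)-u(n)\bigr)^{p-1},\qquad n\in\NN .
\]

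Next I would factor $n^{(p-1)/p}$ out of each difference:
\[
u(n)-u(n-1)=n^{(p-1)/p}\Bigl(1-(1-1/n)^{(p-1)/p}\Bigr),\qquad
u(n+1)-u(n)=n^{(p-1)/p}\Bigl((1+1/n)^{(p-1)/p}-1\Bigr),
\]
which also covers $n=1$ because $(1-1/1)^{(p-1)/p}=0^{(p-1)/p}=0$, recovering $u(1)-u(0)=1$. Raising both to the power $p-1$ and substituting back, the common prefactor is $n^{(p-1)^2/p}$, and since $u(n)^{p-1}=\bigl(n^{(p-1)/p}\bigr)^{p-1}=n^{(p-1)^2/p}$, we obtain
\[
\Delta_p u(n)=n^{(p-1)^2/p}\left[\Bigl(1-(1-1/n)^{(p-1)/p}\Bigr)^{p-1}-\Bigl((1+1/n)^{(p-1)/p}-1\Bigr)^{p-1}\right]=w_p(n)\,u(n)^{p-1},
\]
which is the claim.

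There is no genuine analytic obstacle here; the content is the bookkeeping of exponents. The two points that need care are the edge case $n=1$, where one must make sure the factorization still produces the term $(u(1)-u(0))^{p-1}=1$ correctly (it does, via $0^{(p-1)/p}=0$), and the verification that $\tfrac{p-1}{p}\cdot(p-1)=\tfrac{(p-1)^2}{p}$ is exactly the exponent occurring in $u(n)^{p-1}$, so that the prefactor cancels cleanly and leaves precisely $w_p(n)$.
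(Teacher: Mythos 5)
Your proposal is correct and is exactly the ``direct check'' that the paper's one-line proof refers to: the monotonicity of $t\mapsto t^{(p-1)/p}$ fixes the signs in $\Delta_p u(n)$, and factoring $n^{(p-1)/p}$ out of both differences makes the prefactor $n^{(p-1)^2/p}=u(n)^{p-1}$ cancel, leaving $w_p(n)$. Your explicit treatment of the boundary case $n=1$ (via $u(0)=0$) is a welcome detail the paper leaves implicit.
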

\begin{proof}
	One directly checks that for all $n\in \NN$
	\begin{align*}
\frac{\Delta_{p}u(n)}{u^{p-1}(n)}	=\frac{\Delta_p n^{(p-1)/p}}{n^{(p-1)^{2}/p}}=w_{p}(n)
	\end{align*}
	which immediately yields the statement.	
\end{proof}

Combining the two propositions above already yields the $ p $-Hardy inequality with the weight $ w_{p} $. Next we show that $ w_{p} $ is strictly larger than the classical Hardy weight $ w_{p}^{H}(n)=\bigl({(p-1)}/{p }\bigr)^{p}{n^{-p}}$ for all  $ n\in\NN $.

\section{Proof of  $ w_p>w_{p}^{H}$}\label{sec:w_p>w_H}
In this section we show that the weight \[ w_p(n)=\left( 1-\left(1-\frac{1}{n}\right)^{\frac{p-1}{p}}\right)^{p-1}-\left( \left(1+\frac{1}{n}\right)^{\frac{p-1}{p}}-1\right)^{p-1} \] 
from the main theorem, Theorem~\ref{thm}, is strictly larger than the classical $p$-Hardy weight \[ w^H_p(n)=\left(\frac{p-1}{p} \right)^p\frac{1}{n^p}. \]
In fact, for fixed $ p\in(1,\infty) $, we  analyze the function $ w\colon[0,1]\to [0,\infty) $
\begin{align*}
w(x)&=\left(1-(1-x)^{1/q}\right)^{p-1}-\left((1+x)^{1/q}-1\right)^{p-1}
\end{align*}
for $x \in [0,1/2]$ and $x=1$, where $q\in (1,\infty)$ is  such that $1/p + 1/q=1$. Specifically, we  show 
\[w(x)> \left(\frac{x}{q}\right)^{p}.\]
The case $ x=1 $ is simple and is treated at the end of the section. The proof for $ x\leq 1/2 $ is also elementary but more involved. We proceed by bringing $ w_{p} $ into form for which we then analyze its parts. This will be eventually done by a case distinction depending on $ p $.

Recall the binomial theorem for $r \in[0,\infty)$ and $0\leq x \leq 1$
\[ (1\pm x)^{r}= \sum_{k=0}^\infty \binom{r}{k}(\pm 1)^kx^k\]
where $\binom{r}{0}=1, \binom{r}{1}=r$ and $\binom{r}{k}=r(r-1)\cdots (r-k+1)/k!$ for $k\geq 2$ which is derived from the Taylor expansion of the function $ x\mapsto (1\pm x)^{r} $. Applying this formula to the function $ w $ from above we obtain
\begin{align*}
w(x)&=\left(-\sum_{k=1}^{\infty}\binom{{1/q }}{ k} (-x)^{k}\right)^{p-1}-\left(\sum_{k=1}^{\infty}\binom{{1/q }}{ k}x^{k}\right)^{p-1}\\
&=\left(\frac{x}{q}\right)^{p-1}\left(\left(q\sum_{k=0}^{\infty}\binom{{1/q }}{ k+1} (-x)^{k}\right)^{p-1}-\left(q\sum_{k=0}^{\infty}\binom{{1/q }}{ k+1}x^{k}\right)^{p-1}\right)
\end{align*}
To streamline notation we set
\begin{align*}
g(x)=q\sum_{k=1}^{\infty}\binom{{1/q }}{ k+1}x^{k}.
\end{align*}
Note that since $q\abs{\binom{1/q}{1}}=1$ and $q\abs{\binom{1/q}{k}}<1$ for $k\geq 2$, we have ${0<  |g(\pm x)|<1}$ for $0\le x\le 1/2$. Thus, we can apply the binomial theorem to $\bigl(1+g(\pm x)\bigr)^{p-1}$ in order to get
\begin{align*}
w(x)
&=\left(\frac{x}{q}\right)^{p-1}\Bigl(\bigl(1+g(-x)\bigr)^{p-1} - \bigl(1+g(x)\bigr)^{p-1}\Bigr)\\
&=\left(\frac{x}{q}\right)^{p-1}
\left(\sum_{n=0}^\infty\binom{p-1}{n}\bigl(g^{n}(-x)-g^{n}(x)\bigr)\right)
\\ &=\left(\frac{x}{q}\right)^{p-1}
\left(\binom{p-1}{1}\bigl(g(-x)-g(x)\bigr)+\sum_{n=2}^{\infty}\binom{p-1}{n}\bigl(g^{n}(-x)-g^{n}(x)\bigr)\right)
\end{align*}
Thus, we have to show that the second factor on the left hand side is strictly larger than $ x/q $.
Using $ q=p/(p-1) $  we compute the first term in the parenthesis on the left hand side
\begin{align*}
\binom{p-1}{1}&\bigl(g(-x)-g(x)\bigr)=q(p-1)\sum_{k=1}^{\infty}\binom{{1/q }}{ k+1}\left((-x)^{k} - x^{k}\right)\\
&=\frac{q(p-1)(1/q)(1/q-1)}{2}(-2x)+q(p-1)\sum_{k=2}^{\infty}\binom{{1/q }}{ k+1}\left((-x)^{k} - x^{k}\right)
\\
&=\frac{x}{q}-2p\sum_{k\in 2\mathbb{N}+1}\binom{{1/q }}{ k+1}x^{k}\\
&=\frac{x}{q}+E_{p}(x)
\end{align*}
and we note that  since $ -2p \binom{{1/q }}{ k+1}> 0$ for odd $ k $ \[  E_{p}(x)>0  \]
for $ x>0 $.
So, it remains to show that for  the term
\begin{align*}
F_{p}(x)=\sum_{n=2}^{\infty}\binom{p-1}{n}\bigl(g^{n}(-x)-g^{n}(x)\bigr)
\end{align*}
we have for $0<x\leq 1/2  $
\begin{align*}
E_{p}(x)+F_{p}(x)> 0.
\end{align*}
Specifically, we then get with the substitution $ x=1/n $
\begin{align*}
w_{p}(n)=w(1/n)=\left(\frac{1}{nq}\right)^{p-1}
\left(\frac{1}{nq}+E_{p}(1/n)+F_{p}(1/n)\right)>\frac{1}{(nq)^p}=w_{p}^{H}(n)
\end{align*}
for $ n\ge 2 $.

\begin{remark}
	It is not hard to see that $ F_{p}\ge 0 $ whenever $ p\in\NN $ is integer valued. Indeed, $ g(-x)\ge g(x) $ as all terms in the sum $ g(-x) $ are positive since   $ - \binom{{1/q }}{ k+1}\ge 0$ for odd $ k $, while the terms in $ g(x) $ alternate, (they are positive for even $ k $ and negative for odd $ k $). Moreover for positive integers $ p $ the binomial coefficients $ \binom{p-1}{n} $ are positive.  Thus, the Hardy weight we computed is larger than the classical one for integer $ p $.
\end{remark}

Let us now turn to the proof of \[E_{p}(x)+F_{p}(x)>0 \] for $ p \in(1,\infty)$ and $0< x\leq1/2$.

We collect the following basic properties of the function $ g $ which were partially already discussed above and will be used subsequently.
\begin{lemma}\label{l:prop_g}For $ p\in(1,\infty) $ and $ 0<x\le  1/2 $, we have
	\begin{align*}
	-1<g(x)< 0< -g(x)< g(-x)<1.
	\end{align*}	
\end{lemma}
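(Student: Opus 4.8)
The plan is to verify, separately for $0<x\le 1/2$, the four strict inequalities $-1<g(x)$, $g(x)<0$, $-g(x)<g(-x)$ and $g(-x)<1$; the remaining claim $0<-g(x)$ is then immediate. The basic bookkeeping tool is the sign of the binomial coefficients: since $1/q\in(0,1)$, for every $m\ge 2$ the factors $1/q-1,\,1/q-2,\ldots,1/q-(m-1)$ appearing in $\binom{1/q}{m}=\frac{(1/q)(1/q-1)\cdots(1/q-m+1)}{m!}$ are all negative, so $\binom{1/q}{m}$ has sign $(-1)^{m-1}$. The two outer bounds $-1<g(x)$ and $g(-x)<1$ are already contained in the estimate $0<|g(\pm x)|<1$ (valid for $0<x\le 1/2$) recorded just before the statement, so these require nothing new.

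For $g(-x)>0$ — which together with $|g(-x)|<1$ yields $0<g(-x)<1$ — I would observe that in $g(-x)=q\sum_{k\ge 1}(-1)^k\binom{1/q}{k+1}x^k$ the coefficient of $x^k$ equals $q(-1)^k\binom{1/q}{k+1}$, and since $\binom{1/q}{k+1}$ has sign $(-1)^{k}$ this coefficient is strictly positive for every $k\ge 1$; hence $g(-x)>0$ for $x>0$. The very same computation gives $g(x)+g(-x)=2q\sum_{j\ge 1}\binom{1/q}{2j+1}x^{2j}$, and each $\binom{1/q}{2j+1}$ is a binomial coefficient with odd lower index $\ge 3$, hence strictly positive; therefore $g(x)+g(-x)>0$ for $x>0$, which is exactly the middle inequality $-g(x)<g(-x)$.

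The one inequality that is not pure sign counting is $g(x)<0$, since the series for $g(x)$ has alternating signs and cannot be decided coefficientwise. Here I would resurrect the closed form hidden in the derivation of $w$, namely $\tfrac{x}{q}\bigl(1+g(x)\bigr)=(1+x)^{1/q}-1$, equivalently $1+g(x)=\tfrac{q}{x}\bigl((1+x)^{1/q}-1\bigr)$ (obtained just by re-summing $1+g(x)=q\sum_{k\ge 0}\binom{1/q}{k+1}x^{k}$). The strict Bernoulli inequality $(1+x)^{1/q}<1+\tfrac{x}{q}$ for $x>0$, which follows from concavity of $t\mapsto t^{1/q}$, then gives $1+g(x)<1$, i.e.\ $g(x)<0$, and in particular $-g(x)>0$. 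Chaining the four estimates produces $-1<g(x)<0<-g(x)<g(-x)<1$. I expect the only (mild) obstacle to be precisely this last point: recognizing that the alternating series for $g(x)$ must be handled through the closed form $1+g(x)=\tfrac{q}{x}((1+x)^{1/q}-1)$ rather than term by term, after which Bernoulli closes the argument.
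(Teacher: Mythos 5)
Your proof is correct, and most of it runs along the same lines as the paper's (very terse) argument: the sign pattern of the coefficients $b_k=q\binom{1/q}{k+1}$, which are positive for even $k$ and negative for odd $k$, gives $g(-x)>0$ termwise and $g(x)+g(-x)=2\sum_{k\,\mathrm{even}}b_kx^{k}>0$, while the bound $q\bigl|\binom{1/q}{k+1}\bigr|<1$ together with the geometric series yields $|g(\pm x)|<x/(1-x)\le 1$. The one place where you genuinely diverge is the inequality $g(x)<0$: the paper gets it from the third property it records, namely that $(|b_k|)$ decays monotonically, so that the alternating series $\sum b_kx^k$ (leading term negative, magnitudes strictly decreasing for $x\le 1/2$) is negative by pairing consecutive terms -- so your remark that this inequality ``cannot be decided coefficientwise'' is not quite right, it can be, just not by looking at single coefficients. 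Your substitute, resumming to the closed form $1+g(x)=\tfrac{q}{x}\bigl((1+x)^{1/q}-1\bigr)$ and invoking the strict Bernoulli inequality $(1+x)^{1/q}<1+x/q$, is perfectly valid and arguably cleaner; it also shows $g(x)<0$ for all $x\in(0,1]$ rather than only on $(0,1/2]$, though nothing in the paper needs that extra range. Both routes are elementary and of comparable length.
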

\begin{proof}
	The function $ g  $ is given by $ g(x)=q\sum_{k=1}^{\infty}\binom{{1/q }}{ k+1}x^{k} $. Since $ q>1 $, the coefficients $ b_k=q\binom{{1/q }}{ k+1} $ are negative for odd $ k $ and positive for even $ k $. Furthermore,  the sequence $ (|b_{k} |)$ takes values strictly less than $ 1 $ and decays  monotonically. Thus, the asserted inequalities follow easily.
\end{proof}

We distinguish the following three cases depending on $ p $ for which the arguments are quite different:
\begin{itemize}
	\item $ p $ lies between an odd an an even number with the subcases:
	\begin{itemize}
		\item[$ \bullet $] $ p\in[3,\infty) $
		\item[$ \bullet $] $ p\in (1,2] $
	\end{itemize}
	\item $ p $ lies between an even an odd number.
\end{itemize}

We start with investigating the case of $ p $ lying between an odd and an even number. 
To this end we consider  two subsequent summands as they appear in the sum given by $ F_{p} $ and show that they are positive. (Indeed the sum in $ F_{p} $ starts at $ n=2 $ but we also consider the corresponding term for $ n=1 $.)

%\textbf{The case $ 2k-1\leq p\leq 2k $ and $ k\ge 2 $.}  

\begin{lemma}\label{l:diff} Let $ p  $ be such that there is 
	$k\in\NN$  with $ 2k-1\leq p\leq 2k $. Then, for all $0< x\leq 1/2$ and odd $ n\in 2\NN-1 $
	\begin{align*}
	\binom{{p-1 }}{ n}&\bigl(g^{n}(-x)-g^{n}(x)\bigr)+\binom{{p-1 }}{ n+1}\bigl(g^{n+1}(-x)-g^{n+1}(x)\bigr)\ge 0.
	\end{align*}
\end{lemma}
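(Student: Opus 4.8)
The plan is to reduce the statement to elementary sign considerations. First I would invoke Lemma~\ref{l:prop_g} and abbreviate $a=g(-x)$ and $b=-g(x)$, so that $0<b<a<1$ for $0<x\le 1/2$. Then for odd $n$ one has $g^{n}(-x)-g^{n}(x)=a^{n}+b^{n}>0$, while for even $m$ one has $g^{m}(-x)-g^{m}(x)=a^{m}-b^{m}>0$; in particular both $a^{n}+b^{n}$ and $a^{n+1}-b^{n+1}$ are nonnegative. So the sign of each of the two summands in the asserted inequality is governed entirely by the sign of the corresponding binomial coefficient.

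Next I would determine these signs. Writing $\binom{p-1}{m}=\frac{1}{m!}\prod_{j=1}^{m}(p-j)$ and using $p\in[2k-1,2k]$, one checks factor by factor that $p-j\ge 0$ for $1\le j\le 2k-1$, that $p-2k\le 0$, and that $p-j<0$ for $j\ge 2k+1$. Counting the nonpositive factors, it follows that $\binom{p-1}{n}\ge 0$ for every odd $n$, and that $\binom{p-1}{n+1}\ge 0$ when $n+1\le 2k-2$ (equivalently $n\le 2k-3$), while $\binom{p-1}{n+1}\le 0$ when $n+1\ge 2k$ (equivalently $n\ge 2k-1$); since $n+1$ is even and $2k-1$ is odd, these two ranges are exhaustive.

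In the first range, $n\le 2k-3$, both binomial coefficients are $\ge 0$ and both $g$-differences are $\ge 0$, so the inequality is immediate. In the second range, $n\ge 2k-1$, I would use the identity $\binom{p-1}{n+1}=\binom{p-1}{n}\frac{(p-1)-n}{n+1}$, which gives $-\binom{p-1}{n+1}=\binom{p-1}{n}\,c$ with $c=\frac{n+1-p}{n+1}\in[0,1)$, because $p\le 2k\le n+1$ and $p>0$. If $\binom{p-1}{n}=0$ both summands vanish and there is nothing to prove; otherwise, dividing by $\binom{p-1}{n}>0$ reduces the claim to $a^{n}+b^{n}\ge c\,(a^{n+1}-b^{n+1})$, which holds since $c<1$ and $0<a<1$ imply $c(a^{n+1}-b^{n+1})\le a^{n+1}-b^{n+1}\le a^{n+1}<a^{n}\le a^{n}+b^{n}$.

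There is no real obstacle here beyond the bookkeeping of signs; the one point requiring a little care is that the degenerate cases where $p$ is an integer can make $\binom{p-1}{n}$ vanish, but these are disposed of at once because then the entire pair of terms equals zero.
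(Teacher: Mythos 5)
Your proof is correct and follows essentially the same route as the paper's: establish the sign pattern of the binomial coefficients from the factorization of $\binom{p-1}{m}$, use the ratio identity $\binom{p-1}{n+1}=\binom{p-1}{n}\frac{p-1-n}{n+1}$ to get $\bigl|\binom{p-1}{n+1}\bigr|\le\bigl|\binom{p-1}{n}\bigr|$, and then conclude via $0<a<1$ so that $a^{n+1}<a^{n}$. If anything, your case split at $n+1\le 2k-2$ versus $n+1\ge 2k$ is slightly cleaner than the paper's restriction to ``$n\ge p$'', since it explicitly covers the boundary case $n=2k-1<p<2k=n+1$, where $\binom{p-1}{n+1}$ is already nonpositive even though $n<p$.
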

\begin{proof}
	Let $n\in \NN$. Note that
	\[\binom{{p-1 }}{ n} = \frac{(p-1)(p-2)\cdots (p-n)}{n!}  \ge0 \]
	for $ n \le 2k-1 \le p$ and for $ n\ge p $ with $ n\in 2\NN -1 $. Moreover, for $ n\in 2\NN-1 $ with $ n\ge  p $, we have $ \binom{{p-1 }}{ n+1} \leq 0$. Specifically, $ \binom{{p-1 }}{ n}  $ has alternating signs for $ n\ge p $. 
	
	From now on let $ n \in 2\NN-1$ with $ n\ge p $. Then, 
	\begin{align*}
	\binom{{p-1 }}{ n}\ge -\binom{{p-1 }}{ n+1} \ge0.
	\end{align*}
	From Lemma~\ref{l:prop_g} we know $ -g(x)>0 $ and hence, for odd $ n\in 2\NN-1 $, we have
	\begin{align*}
	-g^{n}(x)=-g(x) |g(x)|^{n-1} \ge 0
	\end{align*}
	and obviously (as $ n+1 $ is then even)
	\begin{align*}
	g^{n+1}(x)\ge 0.
	\end{align*}
	We obtain using the arguments collected  before
	\begin{align*}
	\binom{{p-1 }}{ n}&\bigl(g^{n}(-x)-g^{n}(x)\bigr)+\binom{{p-1 }}{ n+1}\bigl(g^{n+1}(-x)-g^{n+1}(x)\bigr)\\
	&=\abs{\binom{{p-1 }}{ n}}\bigl(g^{n}(-x)-g^{n}(x)\bigr)-\abs{\binom{{p-1 }}{ n+1}}\bigl(g^{n+1}(-x)-g^{n+1}(x)\bigr)\\
	&\ge\abs{\binom{{p-1 }}{ n}}g^{n}(-x)-\abs{\binom{{p-1 }}{ n+1}}g^{n+1}(-x)\\
	&\ge \abs{\binom{{p-1 }}{ n}}\left(g^{n}(-x)-g^{n+1}(-x)\right)\\
	&\ge 0,
	\end{align*}
	where the last inequality follows from 
	$ 0\le g(-x)< 1 $ for $0\le x\le1/2 $, see Lemma~\ref{l:prop_g}.
\end{proof}

With Lemma~\ref{l:diff} we can treat the case of $ p\ge 3 $ lying between an odd and an even number.
This is done in the next proposition.
\begin{proposition}\label{p:Case1}
	Let $ p \ge 3 $ be such that there is 
	$k\in\NN$  with $ k\geq 2$ and $ 2k-1\leq p\leq 2k $. Then, for all $0< x\leq 1/2$  we have $ F_{p}(x)\ge 0 $ and
	\begin{align*}
	E_{p}(x)+F_{p}(x)>0.
	\end{align*}
	In particular, $ w_{p}(n)>w_{p}^{H}(n) $ for $ n\ge2 $. 
\end{proposition}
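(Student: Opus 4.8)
\textbf{Plan for the proof of Proposition~\ref{p:Case1}.}
The plan is to reduce the inequality $E_{p}(x)+F_{p}(x)>0$ to showing $F_{p}(x)\ge 0$, since $E_{p}(x)>0$ for $x>0$ has already been established. For the regime $p\ge 3$ with $2k-1\le p\le 2k$ and $k\ge 2$, the key observation is that the sum defining $F_{p}(x)=\sum_{n=2}^{\infty}\binom{p-1}{n}\bigl(g^{n}(-x)-g^{n}(x)\bigr)$ can be grouped into consecutive pairs of terms, each pair consisting of an odd index $n$ and the following even index $n+1$. Lemma~\ref{l:diff} tells us precisely that each such pair $\binom{p-1}{n}\bigl(g^{n}(-x)-g^{n}(x)\bigr)+\binom{p-1}{n+1}\bigl(g^{n+1}(-x)-g^{n+1}(x)\bigr)$ is nonnegative for odd $n\in 2\NN-1$ and $0<x\le 1/2$.

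The main point to be careful about is the bookkeeping of indices: the sum in $F_{p}$ starts at $n=2$, which is even, so the natural pairing $(2,3),(4,5),\dots$ would leave $n=2$ as an unpaired first term of a pair whose odd partner $n=3$ comes second — but Lemma~\ref{l:diff} is phrased for pairs starting at an odd index $n$, i.e. $(1,2),(3,4),(5,6),\dots$. Hence I would first add and subtract the $n=1$ term. Concretely, write
\begin{align*}
F_{p}(x) = \left(\sum_{n\in 2\NN-1}\left[\binom{p-1}{n}\bigl(g^{n}(-x)-g^{n}(x)\bigr)+\binom{p-1}{n+1}\bigl(g^{n+1}(-x)-g^{n+1}(x)\bigr)\right]\right) - \binom{p-1}{1}\bigl(g(-x)-g(x)\bigr).
\end{align*}
By Lemma~\ref{l:diff} every bracketed summand is $\ge 0$, so the infinite sum is $\ge 0$. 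The subtracted term $\binom{p-1}{1}\bigl(g(-x)-g(x)\bigr)=(p-1)\bigl(g(-x)-g(x)\bigr)$ is nonnegative because $p\ge 3>1$ and, by Lemma~\ref{l:prop_g}, $g(-x)>-g(x)>0$, so $g(-x)-g(x)>0$; thus subtracting it could in principle make $F_{p}$ negative. The resolution is that this term was already extracted \emph{separately} in the computation preceding Lemma~\ref{l:prop_g}: recall $\binom{p-1}{1}\bigl(g(-x)-g(x)\bigr)=\frac{x}{q}+E_{p}(x)$. I would therefore not route the $n=1$ term through this cancellation at all; instead the cleaner bookkeeping is to observe directly that $F_{p}(x)+\binom{p-1}{1}\bigl(g(-x)-g(x)\bigr)=\sum_{n=1}^{\infty}\binom{p-1}{n}\bigl(g^{n}(-x)-g^{n}(x)\bigr)$ equals the full paired sum $\sum_{n\in 2\NN-1}[\cdots]\ge 0$ by Lemma~\ref{l:diff}, and hence $F_{p}(x)\ge -\binom{p-1}{1}\bigl(g(-x)-g(x)\bigr)=-\frac{x}{q}-E_{p}(x)$, which gives $E_{p}(x)+F_{p}(x)\ge -\frac{x}{q}$ — not good enough. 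So instead I would keep the $n=1$ term genuinely separate: show $F_{p}(x)\ge 0$ by pairing $(2,3),(4,5),\dots$ and handling the leftover. Since $F_{p}$ starts at $n=2$ (even), pair $n=2$ with $n=3$, $n=4$ with $n=5$, etc.; for $p\ge 3$ one has $\binom{p-1}{2}\ge 0$ and the sign pattern of $\binom{p-1}{n}$ only starts alternating past $n=\lfloor p\rfloor$, so the finitely many initial terms with $2\le n\le \lfloor p\rfloor$ all have $\binom{p-1}{n}\ge 0$ and contribute $\binom{p-1}{n}\bigl(g^{n}(-x)-g^{n}(x)\bigr)\ge 0$ individually (using $g^{n}(-x)\ge g^{n}(x)$, which holds since $|g(-x)|\ge|g(x)|$ and for even $n$ both powers are nonnegative with $g^n(-x)=|g(-x)|^n\ge|g(x)|^n=g^n(x)$, while for odd $n\le\lfloor p\rfloor$ one has $g^n(-x)=|g(-x)|^n>0>g(x)|g(x)|^{n-1}=g^n(x)$). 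The tail with $n\ge\lfloor p\rfloor$ (equivalently $n\ge p$ since $p$ is not an integer unless $p=2k$, and the $n=2k$ term has $\binom{p-1}{2k}\ge 0$ anyway) is exactly the regime covered by Lemma~\ref{l:diff}, grouped as $(n,n+1)$ with $n$ odd. So $F_{p}(x)\ge 0$, hence $E_{p}(x)+F_{p}(x)\ge E_{p}(x)>0$ for $0<x\le 1/2$, and the final claim $w_{p}(n)>w_{p}^{H}(n)$ for $n\ge 2$ follows by the substitution $x=1/n$ as displayed just before Lemma~\ref{l:prop_g}.

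\textbf{Main obstacle.} The genuine difficulty is the index bookkeeping at the boundary between the ``all nonnegative binomial coefficients'' block $2\le n\le\lfloor p\rfloor$ and the ``alternating'' tail $n\ge\lfloor p\rfloor$, together with making sure the pairing used in the tail aligns an \emph{odd} starting index with Lemma~\ref{l:diff}. Depending on the parity of $\lfloor p\rfloor$ one may need to absorb one extra term from the ``block'' into the ``tail'' pairing (or vice versa); since the hypothesis $2k-1\le p\le 2k$ forces $\lfloor p\rfloor\in\{2k-1,2k\}$, one checks both: if $\lfloor p\rfloor=2k-1$ (odd) the tail starts at the odd index $2k-1$ and pairs up cleanly as $(2k-1,2k),(2k+1,2k+2),\dots$; if $p=2k$ exactly, the term $n=2k$ has $\binom{p-1}{2k}=\binom{2k-1}{2k}=0$, so it is harmless and the tail effectively starts at the odd index $2k+1$. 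Everything else — positivity of $E_p$, the sign of $\binom{p-1}{1}(g(-x)-g(x))$, and the final substitution — is routine given Lemmas~\ref{l:prop_g} and~\ref{l:diff}.
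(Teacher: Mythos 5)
Your proposal is correct and follows essentially the same route as the paper: decompose the series for $F_p$ into consecutive (odd, even) pairs controlled by Lemma~\ref{l:diff}, check that the leftover initial terms have nonnegative binomial coefficients and satisfy $g^n(-x)\ge g^n(x)$, and then conclude from $E_p(x)>0$. The paper's bookkeeping is slightly simpler -- since Lemma~\ref{l:diff} is stated for every odd $n\in 2\NN-1$, it isolates only the $n=2$ term and pairs $(3,4),(5,6),\dots$ uniformly, which avoids your case distinction on $\lfloor p\rfloor$ and the detour through the $n=1$ term.
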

\begin{proof}
	We can write $ F_{p}(x)=\sum_{n=2}^{\infty}\binom{p-1}{n}\bigl(g^{n}(-x)-g^{n}(x)\bigr)$ as
	\begin{align*}
	F_{p}(x)=&\binom{p-1}{2}\bigl(g^{2}(-x)-g^{2}(x)\bigr)\\
	&+\sum_{n\in 2\NN+1}^{\infty} \left(\binom{p-1}{n}\bigl(g^{n}(-x)-g^{n}(x)\bigr) + \binom{p-1}{n+1} \bigl(g^{n+1}(-x)-g^{n+1}(x)\bigr)\right)
	\end{align*}
	By Lemma~\ref{l:diff} the terms in the sum on the right hand side are all positive. Furthermore, $ \binom{p-1}{2}\ge0 $ for $ p\ge 3 $ and $ g(-x)\ge |g(x) |$ by Lemma~\ref{l:prop_g}. Thus, also the first term on the right hand side is positive as well and $ F_{p}\ge0 $ follows. From the discussion in the beginning in the section we take $ E_{p}(x)>0 $ for $ 0<x \leq 1/2 $. The ''in particular`` follows from the discussion above  Lemma~\ref{l:prop_g}.
\end{proof}

Note that we cannot treat the case $ 1\leq p\leq 2 $ since the sum in $ F_{p} $ starts at the index $ n=2 $. Hence, there is still a negative term $ \binom{p-1}{2}\bigl(g^{2}(-x)-g^{2}(x)\bigr).$  We deal with this case,    $ 1\leq p\leq 2 $, next.

To this end, we denote the Taylor coefficients of $x\mapsto g(-x) $ by $ a_{k} $, 
i.e.,
\begin{align*}
g(-x)&=q\sum_{k=1}^{\infty}\binom{{1/q }}{ k+1}(-x)^{k} = \sum_{k=1}^{\infty}a_{k}x^k,\\
g(x)&=q\sum_{k=1}^{\infty}\binom{{1/q }}{ k+1}x^{k} = \sum_{k=1}^{\infty}a_{k}(-1)^kx^k.
\end{align*}
The function $ E_{p}(x)=-2p\sum_{k\in 2\mathbb{N}+1}\binom{{1/q }}{ k+1}x^{k} $ is odd and, therefore, we have
\begin{align*}
E_{p}(x)=2(p-1)\sum_{n=1}^{\infty}a_{2n+1}x^{2n+1}.
\end{align*}

\begin{lemma}\label{l:gpm} Let $ p\ge 1 $ and $ 0\leq x\leq 1/2 $. Then,
	\begin{align*}
	g(-x)+g(x)\leq \frac{p+1}{9p^2}.
	\end{align*}
\end{lemma}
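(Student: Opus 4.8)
The plan is to expand $g(-x)+g(x)$ as a power series in $x$, extract the leading coefficient explicitly, and dominate the whole series by a geometric one. Keeping the notation $b_{k}=q\binom{1/q}{k+1}$ from the proof of Lemma~\ref{l:prop_g}, so that $g(x)=\sum_{k=1}^{\infty}b_{k}x^{k}$, the odd powers cancel under symmetrisation and
\begin{align*}
g(-x)+g(x)=2\sum_{k\in 2\NN}b_{k}x^{k}.
\end{align*}
Since $1/q=(p-1)/p\in(0,1)$, each of the factors $1/q-1,\dots,1/q-k$ appearing in $\binom{1/q}{k+1}$ is negative, so $b_{k}$ has the sign of $(-1)^{k}$; in particular $b_{k}=\abs{b_{k}}>0$ for every even $k$, and the series above has nonnegative terms.

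First I would record the leading coefficient. Using $1/q=(p-1)/p$ one computes
\begin{align*}
b_{2}=q\binom{1/q}{3}=\frac{p}{p-1}\cdot\frac{(1/q)(1/q-1)(1/q-2)}{6}=\frac{p}{p-1}\cdot\frac{(p-1)(p+1)}{6p^{3}}=\frac{p+1}{6p^{2}}.
\end{align*}
Next I would use the monotone decay of $(\abs{b_{k}})_{k\ge1}$: writing $r=1/q$ one has $\abs{b_{k}}=\big(\prod_{j=1}^{k}(j-r)\big)/(k+1)!$, whence $\abs{b_{k+1}}/\abs{b_{k}}=(k+1-r)/(k+2)<1$. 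Consequently $b_{k}=\abs{b_{k}}\le\abs{b_{2}}=b_{2}$ for all even $k\ge2$, so that for $0\le x\le1/2$
\begin{align*}
g(-x)+g(x)=2\sum_{k\in 2\NN}b_{k}x^{k}\le 2b_{2}\sum_{k\in 2\NN}x^{k}=2b_{2}\,\frac{x^{2}}{1-x^{2}}\le 2b_{2}\cdot\frac{1}{3}=\frac{p+1}{9p^{2}},
\end{align*}
where the last inequality uses that $x\mapsto x^{2}/(1-x^{2})$ is increasing on $[0,1)$, so its maximum over $[0,1/2]$ equals $1/3$.

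There is no genuine obstacle in this argument: it is an elementary geometric-series estimate, and it is essentially sharp, since equality would require $x=1/2$ together with $b_{k}=b_{2}$ for every even $k$, which already fails for $k=4$. The only points deserving a little care are the sign bookkeeping for the $b_{k}$ (which rests only on $1/q\in(0,1)$), the arithmetic producing $b_{2}=(p+1)/(6p^{2})$, and the monotonicity of $(\abs{b_{k}})$, all of which are routine. The boundary case $p=1$, if one wishes to include it, is trivial since then $1/q=0$ and $g\equiv 0$.
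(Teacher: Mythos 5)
Your proof is correct and follows essentially the same route as the paper's: bound every even coefficient by the leading one $b_{2}=a_{2}=(p+1)/(6p^{2})$ via the monotone decay of $(\abs{b_{k}})$, then sum the geometric series at $x=1/2$ to obtain the factor $1/3$. The only slip is the closing aside about $p=1$: there $b_{k}=q\binom{1/q}{k+1}\to(-1)^{k}/(k+1)\neq 0$, so $g\not\equiv 0$, but the same estimate with $b_{2}=1/3$ still yields the claimed bound $2/9$.
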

\begin{proof}
	We calculate using $ a_{2}\ge a_{n} $ for $ n\ge 2$ and $ 1-1/q=1/p $
	\begin{align*}
	g(-x)+g(x)&=2\sum_{k=1}^\infty a_{2k}x^{2k}\leq 2a_{2}\sum_{k=1}^\infty 2^{-2k}=\frac{2}{3}q\left|\binom{1/q}{3}\right|
	=\frac{p+1}{9p^2}.\qedhere
	\end{align*}
\end{proof}

With the help of this lemma and Lemma~\ref{l:diff} we can treat the case $ p\in (1,2] $.

\begin{proposition}\label{p:Case2}
	Let $ p \in (1,2] $. Then, for all $0< x\leq 1/2$,  we have 
	\begin{align*}
	E_{p}(x)+F_{p}(x)>0.
	\end{align*}
	In particular, $ w_{p}(n)>w_{p}^{H}(n) $ for $ n\ge2 $. 
\end{proposition}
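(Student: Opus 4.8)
The plan is to show the inequality $E_p(x) + F_p(x) > 0$ on $(0,1/2]$ when $p \in (1,2]$ by splitting $F_p$ exactly as in the proof of Proposition~\ref{p:Case1}, isolating the single problematic term. Writing
\begin{align*}
F_p(x) = \binom{p-1}{2}\bigl(g^2(-x) - g^2(x)\bigr) + \sum_{n \in 2\NN+1} \left( \binom{p-1}{n}\bigl(g^n(-x) - g^n(x)\bigr) + \binom{p-1}{n+1}\bigl(g^{n+1}(-x) - g^{n+1}(x)\bigr)\right),
\end{align*}
I would invoke Lemma~\ref{l:diff} (which applies here, since $p \in (1,2]$ is between the odd number $1$ and the even number $2$, so $k=1$) to conclude that the entire sum over odd $n$ is nonnegative. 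Hence it suffices to prove
\begin{align*}
E_p(x) + \binom{p-1}{2}\bigl(g^2(-x) - g^2(x)\bigr) > 0.
\end{align*}
Since $p \le 2$, $\binom{p-1}{2} = (p-1)(p-2)/2 \le 0$, so this term is the only obstruction and I need a lower bound on $E_p$ that beats it.

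The key computation is to factor the difference of squares: $g^2(-x) - g^2(x) = \bigl(g(-x) - g(x)\bigr)\bigl(g(-x) + g(x)\bigr)$. The first factor is essentially $E_p$ up to constants: from the definitions, $g(-x) - g(x) = 2\sum_{k \in 2\NN+1} a_k x^k = E_p(x)/(p-1)$ using the formula $E_p(x) = 2(p-1)\sum_{n\ge1} a_{2n+1} x^{2n+1}$ recorded just before Lemma~\ref{l:gpm}. The second factor $g(-x) + g(x)$ is controlled by Lemma~\ref{l:gpm}: it is at most $(p+1)/(9p^2)$, and it is positive for $x > 0$. Therefore
\begin{align*}
\binom{p-1}{2}\bigl(g^2(-x) - g^2(x)\bigr) = \frac{(p-1)(p-2)}{2}\cdot \frac{E_p(x)}{p-1}\cdot\bigl(g(-x)+g(x)\bigr) = \frac{(p-2)}{2} E_p(x)\bigl(g(-x)+g(x)\bigr).
\end{align*}
Since $E_p(x) > 0$ and $g(-x)+g(x) > 0$ on $(0,1/2]$, and $p - 2 \le 0$, this is a nonpositive multiple of $E_p(x)$, so
\begin{align*}
E_p(x) + \binom{p-1}{2}\bigl(g^2(-x)-g^2(x)\bigr) = E_p(x)\left(1 + \frac{p-2}{2}\bigl(g(-x)+g(x)\bigr)\right).
\end{align*}

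It remains to check that the parenthesized factor is strictly positive. Using Lemma~\ref{l:gpm}, $0 < g(-x)+g(x) \le (p+1)/(9p^2)$, and since $(p-2)/2 < 0$ for $p < 2$ (and $= 0$ at $p = 2$), the worst case is
\begin{align*}
1 + \frac{p-2}{2}\bigl(g(-x)+g(x)\bigr) \ge 1 + \frac{p-2}{2}\cdot\frac{p+1}{9p^2} = 1 + \frac{(p-2)(p+1)}{18p^2}.
\end{align*}
For $p \in (1,2]$ one has $(p-2)(p+1) \in (-2,0]$, so $(p-2)(p+1)/(18p^2) \ge -2/(18 p^2) > -1$ (indeed $|(p-2)(p+1)| \le 2 < 18 < 18p^2$ on this range), hence the factor is strictly positive. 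Combined with $E_p(x) > 0$ this gives $E_p(x) + F_p(x) > 0$, and the substitution $x = 1/n$ as in the computation preceding Lemma~\ref{l:prop_g} yields $w_p(n) > w_p^H(n)$ for $n \ge 2$. The main subtlety is purely bookkeeping: matching the constant $2(p-1)$ between $E_p$ and $g(-x)-g(x)$ correctly, and making sure Lemma~\ref{l:diff}'s hypothesis ``$2k-1 \le p \le 2k$'' is read with $k=1$ so that the tail sum is handled; there is no genuine analytic difficulty once the difference of squares is factored.
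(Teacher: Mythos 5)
Your reduction to $E_p(x) + \binom{p-1}{2}\bigl(g^2(-x)-g^2(x)\bigr) > 0$ via Lemma~\ref{l:diff} (with $k=1$) is exactly the paper's first step. The gap is the identity $g(-x)-g(x) = E_p(x)/(p-1)$: this is false. Since $E_p(x) = 2(p-1)\sum_{n\ge 1} a_{2n+1}x^{2n+1}$ omits the $n=0$ term, while $g(-x)-g(x) = 2\sum_{k\ \mathrm{odd}} a_k x^k$ contains the linear term $2a_1x = x/p$, the correct relation is the one the paper records, namely $(p-1)\bigl(g(-x)-g(x)\bigr) = E_p(x) + x/q$. Your factorization therefore silently discards the term
\[ \frac{p-2}{2}\cdot\frac{x}{q}\cdot\bigl(g(-x)+g(x)\bigr) = \frac{(p-1)(p-2)}{2p}\,x\bigl(g(-x)+g(x)\bigr), \]
which is strictly negative for $p\in(1,2)$. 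This term is not lower order: $g(-x)+g(x) = 2a_2x^2 + O(x^4)$, so the discarded term is of order $x^3$, the same order as the leading term $2(p-1)a_3x^3$ of $E_p(x)$, and its coefficient is a sizeable fraction of the one you keep (the ratio at leading order is $2(p-2)/(2p+1)$, about $-2/3$ for $p$ near $1$). Dropping a negative term of comparable magnitude means you have bounded the quantity from above rather than below, so positivity does not follow.

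Handling precisely this term is the real content of the paper's proof. After the factorization is done correctly one is left with
\[ \Bigl(1+\tfrac{p-2}{2}\bigl(g(-x)+g(x)\bigr)\Bigr)E_p(x) + (p-1)\tfrac{p-2}{2p}\,x\bigl(g(-x)+g(x)\bigr), \]
and the paper compares the two power series coefficientwise, using $a_{2k+1}\ge \frac{2p+1}{4p}a_{2k}$ to reduce everything to the positivity of the cubic $74p^3-55p^2-5p-2$ for $p>1$. Your control of the prefactor $1+\frac{p-2}{2}\bigl(g(-x)+g(x)\bigr)$ via Lemma~\ref{l:gpm} is the easy half and appears verbatim in the paper; the coefficient comparison is the part your argument is missing, and without it the proposition is not proved.
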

\begin{proof}We show $ E_{p}+F_{p}>0 $ and deduce the ''in particular'' from the discussion above Lemma~\ref{l:prop_g}.
	By Lemma~\ref{l:diff} we have for all $0< x\leq 1/2$
	\begin{align*}
	E_{p}(x)&+F_{p}(x)\\
	=&E_{p}(x)+\binom{p-1}{2}\bigl(g^{2}(-x)-g^{2}(x)\bigr)\\
	&+\sum_{n\in 2\NN+1}^{\infty} \left(\binom{p-1}{n}\bigl(g^{n}(-x)-g^{n}(x)\bigr) + \binom{p-1}{n+1} \bigl(g^{n+1}(-x)-g^{n+1}(x)\bigr)\right)
	\\\ge& E_{p}(x)+\binom{p-1}{2}\bigl(g^{2}(-x)-g^{2}(x) \bigr)\\
	=& E_{p}(x)+\frac{(p-1)(p-2)}{2}\bigl(g(-x)-g(x) \bigr)\bigl(g(-x)+g(x) \bigr).
	\end{align*}
	We proceed  using the definition of $ E_{p} $, i.e.,  $(p-1)\bigl(g(-x)-g(x)\bigr)=E_{p}(x)+ \frac{p-1}{p}x $, to obtain
	\begin{align*}
	\ldots
	&=E_{p}(x)+\frac{p-2}{2}\left(E_{p}(x)+\frac{p-1}{p}x \right)\bigl(g(-x)+g(x) \bigr)\\
	&=\left(1+\frac{p-2}{2}\bigl(g(-x)+g(x) \bigr)\right)E_{p}(x)+(p-1)\frac{(p-2)}{2p}x\bigl(g(-x)+g(x) \bigr)\\
	&\geq {\left(1+\frac{(p-2)(p+1)}{18p^2}\right)}E_{p}(x)+(p-1){\frac{p-2}{2p} }x\bigl(g(-x)+g(x) \bigr),
	\end{align*}
	where we used Lemma~\ref{l:gpm} and $ E_{p}\ge0 $ to gain the inequality. Hence, we get using the representation $ E_{p}(x)=2(p-1)\sum_{k=1}^{\infty}a_{2k+1}x^{2k+1} $ and $ \bigl(g(-x)+g(x)\bigr)=2\sum_{k=1}^{\infty}a_{2k}x^{2k} $
	\begin{align*}
	\ldots=&2(p-1)\sum_{k=1}^\infty \left(\left(1+\frac{(p-2)(p+1)}{18p^2}\right)a_{2k+1}+\frac{p-2}{2p}a_{2k}\right)x^{2k+1}.
	\end{align*}
	Using the fact $ a_{2k+1}=\frac{q(2k+1)-1}{q(2k+2)}a_{2k}\ge\frac{3q-1}{4q}a_{2k}=\frac{2p+1}{4p}a_{2k}  $  we deduce
	\begin{align*}
	\ldots\geq&2(p-1)\sum_{k=1}^\infty \left(\left(1+\frac{(p-2)(p+1)}{18p^2}\right)\frac{2p+1}{4p}+\frac{p-2}{2p}\right)a_{2k}x^{2k+1}\\
	=&2(p-1)  \frac{74 p^3 - 55 p^2 - 5 p - 2}{72 p^3}\sum_{k=1}^\infty a_{2k}x^{2k+1}\ge 0,
	\end{align*}
	where positivity follows for $ p>1 $ as the leading coefficient of $p\mapsto 74p^3-55p^2-5p-2 $ is larger than the sum of the absolute values of the remaining coefficients.
\end{proof}

Hence, it remains to consider the case of $ p $  between an even and an odd integer for which we need the following three lemmas.
%\textbf{The case $ 2k\leq p\leq 2k+1 $ and $ k\ge 1 $.}
%We continue to estimate the $ a_{k} $ in the following way.
\begin{lemma}\label{l:q}
	Let $ p, q \geq 1 $ such that $1/p+1/q=1$ and $ k\ge 2 $. Then,
	\begin{align*}
	a_{k}=q\left|	\binom{1/q}{k+1}\right|\ge\frac{1}{pk(k+1)}=\frac{1}{q(p-1)k(k+1)}.
	\end{align*}
\end{lemma}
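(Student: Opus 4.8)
The plan is to prove the lower bound $a_k = q\bigl|\binom{1/q}{k+1}\bigr| \ge \frac{1}{pk(k+1)}$ by writing the binomial coefficient as an explicit product and estimating the factors from below. Recall that for $k\ge 2$,
\[
\binom{1/q}{k+1}=\frac{(1/q)(1/q-1)(1/q-2)\cdots(1/q-k)}{(k+1)!},
\]
and since $0<1/q<1$, the first factor $1/q$ is positive while each of the remaining $k$ factors $1/q-j$ for $j=1,\dots,k$ is negative. Hence
\[
a_k=q\left|\binom{1/q}{k+1}\right|=\frac{1}{q}\cdot\frac{q}{(k+1)!}\prod_{j=1}^{k}\Bigl(j-\tfrac1q\Bigr)=\frac{1}{(k+1)!}\prod_{j=1}^{k}\Bigl(j-\tfrac1q\Bigr).
\]
The idea is then to split off the $j=1$ factor, which equals $1-1/q=1/p$, and to bound the remaining product $\prod_{j=2}^{k}(j-1/q)$ from below by $\prod_{j=2}^{k}(j-1)=(k-1)!$, using $1/q<1$ so that $j-1/q>j-1$. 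This gives $\prod_{j=1}^{k}(j-1/q)\ge \frac1p (k-1)!$ and therefore
\[
a_k\ge\frac{1}{p}\cdot\frac{(k-1)!}{(k+1)!}=\frac{1}{p\,k(k+1)},
\]
which is exactly the claimed bound; the final equality $\frac1p=\frac{1}{q(p-1)}$ is just the identity $1/p+1/q=1$ rewritten.

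The only subtlety is that the estimate $j-1/q>j-1$ is quite lossy for large $j$, so one should double-check it still gives the stated inequality rather than something stronger that the paper does not want — but since we only need the clean bound $\frac{1}{pk(k+1)}$, this crude comparison suffices and there is no real obstacle. If one wanted to be careful about strictness, note that for $k\ge2$ the inequality is actually strict because the factor $2-1/q>1=2-1$, but the statement only claims "$\ge$", so equality-versus-strictness is not an issue here. In short, the main step is the product expansion and the term-by-term comparison $j-1/q>j-1$; everything else is bookkeeping with factorials and the conjugacy relation between $p$ and $q$.
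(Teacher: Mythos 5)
Your proof is correct and follows essentially the same route as the paper: both expand $q\bigl|\binom{1/q}{k+1}\bigr|$ as the product $\frac{1}{(k+1)!}\prod_{j=1}^{k}(j-1/q)$, isolate the first factor $1-1/q=1/p$, and bound the remaining factors below by $j-1$ (the paper phrases this as $j-1/q=(j-1)(1+\frac{1}{(j-1)p})\ge j-1$, which is the same estimate). No gaps; the argument is fine.
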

\begin{proof}
	We calculate using $1/p+1/q=1$
	\begin{align*}
	q\left|	\binom{1/q}{k+1}\right|&=\frac{(1-1/q)(2-1/q)(3-1/q)\cdots (k-1/q)}{(k+1)!} \\
	&=\frac{1}{pk(k+1)} \frac{(1+1/p)(2+1/p)\cdots ((k-1)+1/p)}{(k-1)!} \\
	&= \frac{1}{pk(k+1)} \left(1+\frac{1}{p}\right)\left(1+\frac{1}{2p}\right)\cdots \left(1+\frac{1}{(k-1)p}\right) \geq \frac{1}{pk(k+1)}.\qedhere
	\end{align*}
\end{proof}

\begin{lemma}\label{l:p}
	Let $ p, q\in(1,\infty) $ such that $1/p+1/q=1$ and $k\in\NN, k> p $. Then,
	\begin{align*}
	\left|\binom{p-1}{k}\right|\leq  \frac{1}{4(p-1)}=\frac{(q-1)}{4}.
	\end{align*}
\end{lemma}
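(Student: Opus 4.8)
The plan is to expand the binomial coefficient into a product of $k$ factors and estimate it factor by factor, just as in the proof of Lemma~\ref{l:q}. Write $m=\lfloor p\rfloor$ and $t=p-m$, so that $m\ge 1$ (because $p>1$) and $t\in[0,1)$; for integer $k$ the hypothesis $k>p$ is equivalent to $k\ge m+1$. Then
\[
\left|\binom{p-1}{k}\right|=\frac{|p-1|\,|p-2|\cdots|p-k|}{k!},
\]
and one checks that $p-j=(m-j)+t\ge 0$ for $1\le j\le m$, while $j-p=(j-m)-t>0$ for $m+1\le j\le k$. Reindexing the two blocks of factors yields the identity
\[
\left|\binom{p-1}{k}\right|=\frac{1}{k!}\Bigl(\prod_{i=1}^{m-1}(i+t)\Bigr)\,t\,(1-t)\,\Bigl(\prod_{l=2}^{k-m}(l-t)\Bigr),
\]
where empty products are read as $1$.

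The next step is the decisive one: I would keep the two ``central'' factors $p-m=t$ and $(m+1)-p=1-t$ together and use $t(1-t)\le\frac14$, while estimating the remaining factors by $i+t<i+1$ and $l-t<l$. Since $\prod_{i=1}^{m-1}(i+1)=m!$ and $\prod_{l=2}^{k-m}l=(k-m)!$, this gives
\[
\left|\binom{p-1}{k}\right|\le\frac14\cdot\frac{m!\,(k-m)!}{k!}=\frac{1}{4\binom{k}{m}}.
\]
Finally, $\binom{k}{m}$ is non-decreasing in $k$ for $k\ge m$, so $\binom{k}{m}\ge\binom{m+1}{m}=m+1$ for all $k\ge m+1$; and since $t<1$ we have $p-1=(m-1)+t<m+1$. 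Combining these,
\[
\left|\binom{p-1}{k}\right|\le\frac{1}{4(m+1)}<\frac{1}{4(p-1)}=\frac{q-1}{4},
\]
which is the claim.

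The only genuine obstacle is the bookkeeping in the middle estimate: the factor $t(1-t)$ has to be isolated \emph{before} estimating, because a naive factorwise bound (using, say, $p-1<m$ and $m+1-p<1$ independently) only produces $1/\binom{k}{m}$ and already fails at $k=m+1$. If one prefers to avoid handling general $k$ directly, an equivalent route is to first observe from $\binom{p-1}{k+1}=\binom{p-1}{k}\frac{p-1-k}{k+1}$ that $k\mapsto\bigl|\binom{p-1}{k}\bigr|$ is non-increasing for $k\ge m+1$, which reduces everything to the single case $k=m+1$, and then run the same elementary estimate there.
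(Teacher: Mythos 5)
Your proof is correct and follows essentially the same route as the paper's: both expand $\bigl|\binom{p-1}{k}\bigr|$ as a product over $j=1,\dots,k$, isolate the two factors $t=p-\lfloor p\rfloor$ and $1-t$ straddling the integer part of $p$, bound their product by $1/4$, and show the remaining factors divided by $k!$ contribute at most roughly $1/(m+1)\le 1/(p-1)$. The only (cosmetic) difference is bookkeeping — the paper pairs each numerator factor with a denominator factor to get ratios $\le 1$, while you assemble the leftover factors into $m!\,(k-m)!/k!=1/\binom{k}{m}$ — so there is nothing to flag.
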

\begin{proof}
	Let $ n\in \NN  $ be such that $ n-1\leq p\leq n $. Moreover, let $\gamma= p-(n-1)$, i.e., $1-\gamma=n-p$, so, $ \gamma\in [0,1] $. Since $ k>p $ and $n,k\in\NN  $, we have that $ k\ge n $ and therefore,
	\begin{align*}
	\left|\binom{p-1}{k}\right|&= \left|\frac{(p-1)(p-2)\cdots(p-(n-1)) (p-n)\cdots (p-k)}{k!}\right|\\
	&=\left|{\left(\frac{p-1}{n-1}\right)\left(\frac{p-2}{n-2}\right)\cdots\frac{\bigl({p}-(n-1)\bigr) }{1}\left(\frac{p-n}{n}\right)\cdots \left(\frac{p-k}{k}\right)}\right|\\
	&\leq \left| \frac{({p}-(n-1))(p-n)}{n}\right|=\frac{\gamma(1-\gamma)}{n} \leq \frac{1}{4(p-1)}=\frac{(q-1)}{4}.\qedhere
	\end{align*}
\end{proof}

\begin{lemma} \label{l:g}
	For $0< x\leq 1/2 $ and $ q>1  $, we get
	\begin{align*}
	g(-x)\leq \frac{(q-1)(5q-1)}{6q^2}x.
	\end{align*}
\end{lemma}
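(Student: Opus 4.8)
The plan is to bound the tail series $g(-x) = \sum_{k=1}^\infty a_k x^k$ from above on $(0,1/2]$ by pulling out the leading term and controlling the remainder geometrically. First I would split off the $k=1$ term, writing
\begin{align*}
g(-x) = a_1 x + \sum_{k=2}^\infty a_k x^k,
\end{align*}
where $a_1 = q\binom{1/q}{2} = -\tfrac12(1/q)(1/q-1) = \tfrac{1}{2q}(1-1/q) = \tfrac{q-1}{2q^2}$ (using $1-1/q=1/p$ and $p/(p-1)=q$; note $a_1>0$). Then I would use that $(|b_k|)$ — equivalently the sequence $(a_k)_{k\ge 2}$ of absolute Taylor coefficients — is decreasing (Lemma~\ref{l:prop_g}'s proof), so $a_k \le a_2$ for $k\ge 2$, and $a_2 = q|\binom{1/q}{3}| = \tfrac{1}{6}(1-1/q)(2-1/q) = \tfrac{(q-1)(2q-1)}{6q^2}$.

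Next, for $0<x\le 1/2$ I would estimate the remainder as a geometric sum: $\sum_{k=2}^\infty a_k x^k \le a_2 x^2 \sum_{j=0}^\infty x^j = \tfrac{a_2 x^2}{1-x} \le a_2 x^2 \cdot 2 = 2 a_2 x^2 \le a_2 x$, where the last step uses $x\le 1/2$. Hence
\begin{align*}
g(-x) \le a_1 x + a_2 x = \left(\frac{q-1}{2q^2} + \frac{(q-1)(2q-1)}{6q^2}\right) x = \frac{(q-1)\bigl(3 + (2q-1)\bigr)}{6q^2}\,x = \frac{(q-1)(2q+2)}{6q^2}x.
\end{align*}
This gives $g(-x) \le \tfrac{(q-1)(q+1)}{3q^2}x$, which is slightly sharper than, but of the same shape as, the claimed bound $\tfrac{(q-1)(5q-1)}{6q^2}x$; since $\tfrac{(q-1)(q+1)}{3q^2} = \tfrac{(q-1)(2q+2)}{6q^2} \le \tfrac{(q-1)(5q-1)}{6q^2}$ for $q\ge 1$ (as $2q+2\le 5q-1 \iff q\ge 1$), the stated inequality follows. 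Alternatively, one can be less wasteful in the remainder estimate — e.g. keep $\tfrac{a_2 x^2}{1-x}$ with $x\le 1/2$ but combine the tail starting at $k=3$ separately using Lemma~\ref{l:q}-type bounds — to land exactly on the constant $\tfrac{(q-1)(5q-1)}{6q^2}$; the precise route chosen for the remainder is the only place where a little care about which terms to lump together matters.

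The main obstacle, such as it is, is purely bookkeeping: getting the algebra for $a_1$ and $a_2$ right in terms of $q$ (keeping track of the factor $q$ in $a_k = q|\binom{1/q}{k+1}|$ and the shift in the index, since $g(-x)$ has coefficient $q\binom{1/q}{k+1}$ on $x^k$), and choosing the split point and geometric bound so that the resulting constant is $\le \tfrac{(q-1)(5q-1)}{6q^2}$ for all $q>1$. There is no analytic subtlety — monotonicity and boundedness of the coefficients from Lemma~\ref{l:prop_g}, together with $x\le 1/2$, do all the work — so the proof is short once the constants are matched up.
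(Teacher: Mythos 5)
Your approach is structurally identical to the paper's: split off the linear term of $g(-x)$, bound the remaining coefficients by $a_2$ using monotonicity, and sum the geometric series with $x\le 1/2$. However, there is a genuine arithmetic error in the leading coefficient. From $g(-x)=q\sum_{k\ge1}\binom{1/q}{k+1}(-x)^k$ one gets
\[
a_1=q\left|\binom{1/q}{2}\right|=q\cdot\frac{(1/q)(1-1/q)}{2}=\frac{1-1/q}{2}=\frac{q-1}{2q},
\]
not $\frac{q-1}{2q^2}$; you dropped the prefactor $q$ (your value of $a_2$ is correct). With the correct $a_1$, your estimate $g(-x)\le(a_1+a_2)x$ yields
\[
\left(\frac{q-1}{2q}+\frac{(q-1)(2q-1)}{6q^2}\right)x
=\frac{(q-1)\bigl(3q+(2q-1)\bigr)}{6q^2}\,x
=\frac{(q-1)(5q-1)}{6q^2}\,x,
\]
which is exactly the paper's constant with no slack to spare, so the lemma is not "slightly sharper" than what this method gives.

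The error is not merely cosmetic: your claimed improved bound $g(-x)\le\frac{(q-1)(q+1)}{3q^2}x$ is false for $q>2$ (equivalently $1<p<2$). Indeed, since all coefficients $a_k$ are positive, $g(-x)>a_1x=\frac{q-1}{2q}x$ for $x>0$, and $\frac{q-1}{2q}>\frac{(q-1)(q+1)}{3q^2}$ precisely when $3q>2q+2$, i.e.\ $q>2$. So as written the proof passes through a false intermediate statement, and the final inequality is only rescued by the coincidence that your (incorrect) constant happens to sit below the target. The fix is a one-line correction of $a_1$, after which your argument reproduces the paper's proof verbatim in content.
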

\begin{proof}
	%Recall that $ \sum_{k=1}^{\infty}(r+1)^{-k}=\frac{1}{r} $. Thus,
	We calculate using $a_2\geq a_k$ for $k\geq 2$	
	\begin{align*}
	g(-x)%&=q\left(\left|\binom{{1/q }}{ 2} \right|+ \sum_{k=2}^{\infty}\left|\binom{{1/q }}{ k+1} \right|x^{k-1}\right)x\\
	&= q\left(\left|\binom{{1/q }}{ 2} \right|+ \sum_{k=1}^{\infty}\left|\binom{{1/q }}{ k+2} \right|x^{k}\right)x\\
	&\leq q\left(\left|\binom{{1/q }}{ 2} \right|+ \sum_{k=1}^{\infty}\left|\binom{{1/q }}{ k+2} \right|2^{-k}\right)x\\
	&\leq q\left(\left|\binom{{1/q }}{ 2} \right|+ \left|\binom{{1/q }}{ 3} \right|\right)x\\
	%	&=\left(\frac{(q-1)}{2q}+ \frac{(q-1)(2q-1)}{6q^2}\right)x\\
	%	&=(q-1)\left(\frac{3q+(2q-1)}{6q^2}\right)x\\
	&=\frac{(q-1)(5q-1)}{6q^2}x.\qedhere
	\end{align*}
\end{proof}

With the help of these lemmas we can finally treat the case where $ p $ lies between an even and an odd number.

\begin{proposition}\label{p:Case3}
	Let $ p \in [2,\infty) $ such that there is $ k\in\NN $ with $ 2k\leq p\leq 2k+1 $. Then, for all $0< x\leq 1/2$  we have 
	\begin{align*}
	E_{p}(x)+F_{p}(x)>0.
	\end{align*}
	In particular, $ w_{p}(n)>w_{p}^{H}(n) $ for $ n\ge2 $. 
\end{proposition}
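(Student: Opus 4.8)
The plan is to extract the negative part of $F_p$ (which, unlike in Propositions~\ref{p:Case1} and~\ref{p:Case2}, is genuinely present here) and to dominate it by the strictly positive $E_p$.

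First I would pin down the signs of the coefficients $\binom{p-1}{n}$. From $\binom{p-1}{n}=(p-1)(p-2)\cdots(p-n)/n!$ and $2k\le p\le 2k+1$: the factors $p-1,\dots,p-2k$ are all nonnegative, so $\binom{p-1}{n}\ge 0$ for $2\le n\le 2k$; while for $n\ge 2k+1$ the remaining $n-2k$ factors $p-2k-1,\dots,p-n$ are nonpositive, so $\binom{p-1}{n}$ has the sign of $(-1)^{n-2k}=(-1)^n$ (and vanishes if $p$ is an integer). Hence $|\binom{p-1}{n}|=(-1)^n\binom{p-1}{n}$ for $n\ge 2k+1$, and this coefficient is $\le 0$ precisely for odd such $n$. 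Since $0<-g(x)<g(-x)<1$ by Lemma~\ref{l:prop_g}, one checks separately for even and odd $n$ that $g^n(-x)-g^n(x)>0$ for all $n\ge1$ and $0<x\le 1/2$. Consequently every summand of $F_p$ with $2\le n\le 2k$ or with even $n\ge 2k+2$ is nonnegative, and dropping them yields
\begin{align*}
F_p(x)\ &\ge\ \sum_{\substack{n\in 2\NN+1\\ n\ge 2k+1}}\binom{p-1}{n}\bigl(g^n(-x)-g^n(x)\bigr)\\
&=\ -\sum_{\substack{n\in 2\NN+1\\ n\ge 2k+1}}\Bigl|\binom{p-1}{n}\Bigr|\bigl(g(-x)^n+|g(x)|^n\bigr).
\end{align*}

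Next I would bound this negative tail from above. On the relevant indices one has $n>p$ (the borderline case $n=p=2k+1$ being harmless since then $\binom{p-1}{n}=0$), so Lemma~\ref{l:p} gives $|\binom{p-1}{n}|\le(q-1)/4$; Lemma~\ref{l:prop_g} gives $|g(x)|\le g(-x)$; and Lemma~\ref{l:g} together with $p\ge 2$ (hence $q\le 2$) gives $g(-x)\le\tfrac{(q-1)(5q-1)}{6q^2}\,x\le\tfrac{3}{8}x\le\tfrac{3}{16}<1$ on $(0,1/2]$. Summing the geometric series and using $g(-x)^{2k+1}\le g(-x)^3\le(3x/8)^3$, the negative tail is at most $\tfrac{q-1}{4}\cdot\tfrac{2\,g(-x)^{2k+1}}{1-g(-x)^2}\le\tfrac{c}{p-1}\,x^3$ for an explicit absolute constant $c$ (one obtains $c=27/988$), the worst case being $k=1$, i.e.\ $p\in[2,3]$. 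On the other hand, all $a_{2m+1}>0$, so $E_p(x)=2(p-1)\sum_{m\ge 1}a_{2m+1}x^{2m+1}\ge 2(p-1)a_3 x^3$, and $a_3\ge\tfrac{1}{12p}$ by Lemma~\ref{l:q}, whence $E_p(x)\ge\tfrac{p-1}{6p}\,x^3$.

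Combining the two estimates, $E_p(x)+F_p(x)\ge\bigl(\tfrac{p-1}{6p}-\tfrac{c}{p-1}\bigr)x^3$, which is strictly positive for every $p\ge 2$: this reduces to the one-variable polynomial inequality $988(p-1)^2>162p$, valid at $p=2$ and with left-hand side growing faster. The ``in particular'' then follows by the substitution $x=1/n$, $n\ge 2$, exactly as in the displayed computation preceding Lemma~\ref{l:prop_g}. I expect the crux to be the middle step: one must recognise that the negative tail of $F_p$, although nonzero, is only of order $x^3$, the same order as the leading term of $E_p$, and that its constant is small enough to be swallowed uniformly in $p\ge 2$. This rests on the contractive behaviour $g(-x)=O(x)$ with a small constant, which is the content of Lemma~\ref{l:g} (itself a consequence of the Taylor coefficients of $g$ being $<1$ in modulus), together with the uniform bound of Lemma~\ref{l:p}.
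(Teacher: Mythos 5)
Your argument is correct and follows the paper's proof in all essentials: the same sign analysis of the coefficients $\binom{p-1}{n}$, the same reduction of $F_p$ to its negative odd tail $n\ge 2k+1$ (handling the borderline $n=p=2k+1$ via the vanishing binomial coefficient, as the paper implicitly does), and the same use of Lemmas~\ref{l:prop_g}, \ref{l:q}, \ref{l:p} and \ref{l:g}. The only divergence is the closing estimate: the paper retains the matching tail $2(p-1)\sum_{n\ge 2k+1,\ n \text{ odd}} a_n x^n$ of $E_p$ and verifies positivity degree by degree (reducing to $\tfrac{1}{n(n+1)}>\tfrac{1}{2^{n+1}}$ by induction), whereas you sum the negative tail of $F_p$ as a geometric series of order $x^3$ and absorb it into the single term $2(p-1)a_3x^3$ of $E_p$ --- an equally valid and slightly more aggregated piece of bookkeeping.
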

\begin{proof}
	Clearly, we have
	$   \binom{{p-1 }}{ n} \ge0 $
	for $ n\le 2k\le p$ and for $ n\in2\NN $. Since we have $ g(-x)\ge |g(x)| $ by Lemma~\ref{l:prop_g}, we obtain for the first $ n\leq p $ terms  and the  terms for even $ n $ in $ F_{p}(x) $ that \[ \binom{{p-1 }}{ n}  \bigl(g^{n}(-x)-g^{n}(x)\bigr)\ge0 . \]
	Note that $ E_{p}(x)= 2(p-1)\sum_{n=1}^{\infty}a_{2n+1}x^{2n+1}> 2(p-1)\sum_{n=k}^{\infty}a_{2n+1}x^{2n+1}$ since the coefficients $a_k$ are positive. With the observation made at the beginning of the proof, this leads to 
	\begin{align*}
	E_{p}(x)+F_{p}(x)> \sum_{n\in 2\NN+1, n\ge 2k+1}\left(2(p-1)a_{n}x^{n}+\binom{p-1}{n}\bigl(g^{n}(-x)-g^{n}(x)\bigr)\right).
	\end{align*}
	For $ n\ge 2k+1 $ with $ n\in 2\NN +1 $, we use $ g(-x) \ge |g(x)| $, Lemma~\ref{l:prop_g}, as well as $\binom{p-1}{n} \leq 0$ in order to estimate
	\begin{align*}
	2(p-1)a_{n} x^{n}&+\binom{p-1}{n} \bigl(g^{n}(-x)-g^{n}(x)\bigr)\ge 2(p-1)a_{n} x^{n}+2\binom{p-1}{n} g^{n}(-x).
	\end{align*}
	We use the estimate on $ a_{n} $, Lemma~\ref{l:q}, on $ \binom{p-1}{n} $, Lemma~\ref{l:p}, and the estimate on $ g(-x) $, Lemma~\ref{l:g} in order to get
	\begin{align*}
	\ldots&\ge %2\frac{1}{qn(n+1)}x^{n}-2\frac{(q-1)}{4}\left(\frac{(q-1)(5q-1)}{6q^2}\right)^{n}x^n\\&=
	2\left(\frac{1}{qn(n+1)}-\frac{(q-1)}{4}\left(\frac{(q-1)(5q-1)}{6q^2}\right)^{n}\right)x^{n}.%=(\triangledown).
	\end{align*}
	Since $ p\ge 2\ge q $, the minimum  is clearly assumed at $ q=2 $. Thus
	\begin{align*}	
	\ldots&\ge\left(\frac{1}{n(n+1)}-\frac{1}{2}\left(\frac{3}{8}\right)^{n}\right)x^{n}\ge\left(\frac{1}{n(n+1)}-\frac{1}{2^{n+1}}\right)x^{n}>0,
	\end{align*}
	where the positivity follows by a simple induction argument. This concludes the proof by noticing that the ''in particular`` part follows from the discussion above Lemma~\ref{l:prop_g}.
\end{proof}

In summary, the above considerations yield 
\begin{align*}
E_{p}(x)+F_{p}(x)>0
\end{align*}
for $ p\in (1,\infty) $ and $0< x\leq 1/2 $. By the discussion at the beginning of the section this yields $ w_{p}(n)>w_{p}^{H}(n) $ 
for $ n\ge 2 $.

We finish the section by treating the case $ n=1 $ which corresponds to $ x=1 $. With this we finally conclude that 
$ w_{p}(n)>w_{p}^{H}(n) $
for all $ n\ge1 $ in the next section.

\begin{proposition}\label{p:1}Let $ p\in(1,\infty) $. Then, $w_p(1)> w^H_p(1)$.
\end{proposition}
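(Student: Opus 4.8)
The plan is to reduce the asserted inequality $w_p(1)>w_p^H(1)$ to two elementary one-variable estimates --- a convexity (chord) bound for the function $t\mapsto 2^{t}$ and Bernoulli's inequality --- glued together by the conjugacy identity $pq=p+q$ (equivalently $(p-1)(q-1)=1$) that comes from $1/p+1/q=1$.

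First I would make both sides explicit. Since $(p-1)/p>0$ we have $0^{(p-1)/p}=0$ and $1^{p-1}=1$, so the formula for $w_p$ from Theorem~\ref{thm}, together with $1/q=(p-1)/p$, gives
\[
w_p(1)=1-\bigl(2^{(p-1)/p}-1\bigr)^{p-1}=1-\bigl(2^{1/q}-1\bigr)^{p-1},
\qquad
w_p^H(1)=\Bigl(\tfrac{p-1}{p}\Bigr)^{p}=q^{-p}.
\]
Thus the claim is equivalent to $\bigl(2^{1/q}-1\bigr)^{p-1}+q^{-p}<1$.

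For the first summand I would use that $t\mapsto 2^{t}$ is convex, so on $[0,1]$ its graph lies strictly below the chord through $(0,1)$ and $(1,2)$; hence $2^{t}<1+t$ for $t\in(0,1)$, and in particular $2^{1/q}-1<1/q$ (note $1/q\in(0,1)$). Raising to the positive power $p-1$ --- using that $x\mapsto x^{p-1}$ is increasing on $[0,\infty)$ --- gives $\bigl(2^{1/q}-1\bigr)^{p-1}<q^{1-p}$, so it suffices to prove $q^{1-p}+q^{-p}\le 1$, that is $q^{p}\ge q+1$. This follows from Bernoulli's inequality with exponent $p>1$: since $q-1\ge 0$,
\[
q^{p}=\bigl(1+(q-1)\bigr)^{p}\ge 1+p(q-1)=1+pq-p=1+q,
\]
the last equality being a rewriting of $1/p+1/q=1$. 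Chaining the two displays yields $w_p(1)=1-\bigl(2^{1/q}-1\bigr)^{p-1}>1-q^{1-p}\ge q^{-p}=w_p^H(1)$, which is the claim.

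I do not anticipate a genuine obstacle here; this $x=1$ case is the ``simple'' one promised in the text. The only point worth a second look is whether the rather crude chord bound $2^{1/q}-1\le 1/q$ is strong enough, and it is --- but only because the resulting slack is exactly absorbed by the identity $(p-1)(q-1)=1$, which is precisely what converts $1+p(q-1)$ into $q+1$ in the Bernoulli step. Note also that the chord bound is strict on the open interval $(0,1)$, so the final inequality is strict, and the Bernoulli step is needed only in its non-strict form.
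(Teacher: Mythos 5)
Your proof is correct, and its second half takes a genuinely different and more elementary route than the paper. The first step is essentially the same: the paper also reduces everything to the estimate $2^{1-1/p}-1<1-1/p$ (i.e.\ $2^{1/q}-1<1/q$), obtained there via the mean value theorem for $t\mapsto t^{1-1/p}$ on $[1,2]$ rather than your chord/convexity bound for $t\mapsto 2^t$ --- these are interchangeable. Where you diverge is in handling the remaining inequality $q^{1-p}+q^{-p}\le 1$, equivalently $\bigl(1-\tfrac1p\bigr)^{p-1}+\bigl(1-\tfrac1p\bigr)^{p}\le 1$: the paper proves this by showing that $\psi(p)=\bigl(1-\tfrac1p\bigr)^{p-1}+\bigl(1-\tfrac1p\bigr)^{p}$ is strictly decreasing on $(1,\infty)$ with $\lim_{t\to1}\psi(t)=1$, which requires computing $\psi'$ and analyzing the sign of $(2p-1)\log\frac{p-1}{p}+2$ via a further monotonicity argument. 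Your observation that the same inequality, after multiplying by $q^{p}$, is just $q^{p}\ge q+1$, which drops out of Bernoulli's inequality combined with $pq=p+q$, is cleaner and avoids calculus entirely; the paper's route, on the other hand, yields the slightly stronger qualitative information that $\psi$ is monotone in $p$. Both arguments are complete and the strictness bookkeeping in your version (strict in the chord step, non-strict in Bernoulli) is handled correctly.
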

\begin{proof}
	Recall that $ w_p(1) =1-(2^{1-1/p}-1)^{p-1}$ and $ w_{p}^{H}=(1-1/p)^{p} $. By the mean value theorem applied to the function $ [1,2]\to[1,2^{1-1/p}] $, $ t\mapsto t^{1-1/p} $ we find
	\begin{align*}
	2^{1-1/p}-1<1-\frac{1}{p}.
	\end{align*}
	Therefore,
	\begin{align*}
	w_{p}(1)-w_{p}^{H}(1)>1-\left(1-\frac{1}{p}\right)^{p-1}-\left(1-\frac{1}{p}\right)^{p}.%=1-\left(2-\frac{1}{p}\right)\left(1-\frac{1}{p}\right)^{p-1} 
	\end{align*}
	Now the function $\psi\colon (1,\infty) \to (0,\infty)$, $ p\mapsto \left(1-{1}/{p}\right)^{p-1}+\left(1-{1}/{p}\right)^{p} $ is strictly monotonically decreasing because
	\begin{align*}
	\psi'(p)= \frac{1}{p-1}\left(\frac{p - 1}{p}\right)^p \left((2 p - 1) \log\left(\frac{p - 1}{p}\right) + 2\right)<0,
	\end{align*}
	since $\theta\colon p\mapsto (2p-1)\log (p-1)/p  $ is strictly monotonically increasing and we have $ \lim_{p\to\infty}\theta(p) = -2 $. Hence, we conclude
	\begin{align*}
	w_{p}(1)-w_{p}^{H}(1)> 1-\psi(p) > 1-\lim_{t\to1}\psi(t)=0.
	\end{align*}
	This finishes the proof.
\end{proof}

\section{Proof of Theorem~\ref{thm}}\label{sec:proof}
%%%%%%%%%%%%%%%%%%%%%%%%%%%%%%%%%%%%%%%
\begin{proof}[Proof of Theorem~\ref{thm}] Combining Proposition~\ref{prop} and Proposition~\ref{p:w_p} from Section~\ref{sec:Hardy} yields that $ w_{p} $ satisfies the Hardy inequality.  In Section~\ref{sec:w_p>w_H}, one obtains from 	Proposition~\ref{p:Case1}, Proposition~\ref{p:Case2}, Proposition~\ref{p:Case3} and Proposition~\ref{p:1} that $ w_{p}>w_{p}^{H} $ on $\NN$. 
	
To see the statement about the coefficients in the series expansion of $ w_{p} $ for integer $p \geq 2$, recall the function 
\[  w(x)=\bigl(1-(1-x)^{1/q}\bigr)^{p-1}-\bigl((1+x)^{1/q}-1\bigr)^{p-1}  \]
with $ 1/p+1/q=1 $ on $ [0,1] $ from the previous section. It is easy to check that the function $w_{+}\colon x\mapsto {\bigl(1-(1-x)^{1/q}\bigr)^{p-1}}$ is  absolutely monotonic  on $ [0,1) $ for integer $ p\ge 2 $ with strictly positive derivatives. On the other hand, expanding the function $w_{-}\colon  x\mapsto {\bigl((1+x)^{q}-1\bigr)^{p-1}} $ at $ x=0 $, we observe that it has the same Taylor coefficients in absolute value as $ w_{+} $. However, the signs alternate such that for the difference $ w_{+}-w_{-} $ of these two functions the even/odd coefficients cancel for odd/even $ p $. Furthermore, in the Taylor expansion of $ w $ at $ x=0 $ the first non-zero coefficient is the one  for $ x^{p} $ (confer Section~\ref{sec:w_p>w_H}).
 This finishes the proof.
\end{proof} 

%\noindent\textbf{Conjecture.} We conjecture that $x\mapsto w(x)$ as defined in Section~\ref{sec:w_p>w_H} is absolutely monotonic.\medskip

\noindent\textbf{Acknowledgements.} The authors gratefully acknowledge inspiring discussions with Y.~Pinchover. Moreover, the authors are grateful for the financial support of the DFG. Furthermore, F.~F. thanks the Heinrich-Böll-Stiftung for the support.

\bibliographystyle{alpha}
\bibliography{literature}
%%%%Literatur Florian
%\printbibliography
%%%%Ende Literatur Florian

%%%%%%%%%%%%%%%%%%%%%%%%%%%%%
\end{document}